\title{\bf Geometric regularity estimates \\ for elliptic equations}
\author{ \textsc{Eduardo V. Teixeira} \\ \textit{\footnotesize Universidade Federal do Cear\'a}  \\ \textit{\footnotesize Fortaleza, CE, Brazil} }
\date{}
\def \div {\mathrm{div}}
\def \suchthat {\ \big | \ }
\newtheorem{theorem}{Theorem}
\newtheorem{lemma}[theorem]{Lemma}
\newtheorem{corollary}[theorem]{Corollary}
\theoremstyle{definition}
\theoremstyle{remark}
\numberwithin{equation}{section}
\newcommand{\intav}[1]{\mathchoice {\mathop{\vrule width 6pt height 3 pt depth  -2.5pt
\kern -8pt \intop}\nolimits_{\kern -6pt#1}} {\mathop{\vrule width
5pt height 3  pt depth -2.6pt \kern -6pt \intop}\nolimits_{#1}}
{\mathop{\vrule width 5pt height 3 pt depth -2.6pt \kern -6pt
\intop}\nolimits_{#1}} {\mathop{\vrule width 5pt height 3 pt depth
-2.6pt \kern -6pt \intop}\nolimits_{#1}}}
\begin{document}
\maketitle

%
%
%
%
%
%

\section{Overview} \label{sct intro}

Regularity theory for diffusive operators is among the finest treasures of the modern mathematical sciences. It appears in several different fields, such as, differential geometry, topology, numerical analysis, dynamical systems, mathematical physics, economics, etc.  Within the general field of Partial Differential Equations, regularity theory places itself in the very core, by bridging the notion of weak solutions (often found by energy methods or probabilistic interpretations) to the classical concept of solutions. 

\par
\medskip 

Probably the first contact undergraduate students have with regularity theory and its theoretical manifestations is in the complex calculus course. Along the first few lectures, the instructor defines the notion of holomorphic functions, which involves one complex derivative. Few lectures later, holomorphic functions are proven to be analytic, in particular of class $C^\infty$ in its domain of definition. What is even more surprising is the fact that one can bound {\it universally} all the derivatives of holomorphic functions, just by  controlling their $L^2$ norms. Yet at the beginning of that course, the students learn about the Cauchy-Riemann equation which provides a necessary and sufficient condition for a differentiable function $\mathbf{f}(x,y)$ to be holomorphic. A consequence of such an observation is that, at least in simply connect domains, a function $\mathbf{f}(x,y) = u(x,y) + i v(x,y)$ is holomorphic if, and only if, both $u$ and $v$ are harmonic functions, that is:
$\Delta u = \Delta v  =0$, where $\Delta$ denotes the Laplacian operator, $\Delta u := \partial_{xx} u + \partial_{yy} u$.

\par
\medskip 

Few courses down the road, the students finally meet the multi-dimensional version of the Laplacian operator in their introductory course on partial differential equations: if $u$ is twice differentiable in an open set of $\mathbb{R}^n$, then its Laplacian is defined as
\begin{equation}\label{Delta}
	\Delta u :=  \partial_{x_1x_1} u +  \partial_{x_2x_2} u + \cdots +  \partial_{x_nx_n} u,
\end{equation}
where $\partial_{x_kx_k}$, for $k=1,2,\cdots n$, denotes the pure second derivative in the $kth$-direction. Surprisingly enough, a similar regularity phenomenon, as in the complex calculus course, holds true. If $u$ is harmonic, in the sense that $u$ verifies $\Delta u = 0$, then $u$ is real analytic. Furthermore, one controls all derivatives of {\it any} harmonic function by the $L^2$ norm. That is, if $u$ is harmonic, say in $B_1 \subset \mathbb{R}^n$, then
$$
	\|D^{\alpha} u \|_{L^\infty(B_{1/2})} \le C_{n, |\alpha|} \cdot \|u\|_{L^2(B_1)},
$$
where $C_{n, |\alpha|}>0$ depends only on dimension $n$ and the order of the derivatives $|\alpha|$. 

\par
\medskip 

The Laplacian operator \eqref{Delta} appears in several mathematical models coming from very different backgrounds. From the physical perspective, such an operator represents diffusion. For sake of illustration, let us briefly revisit the theory of elastic membranes with forcing terms (wind, weight, gravitation, etc...). Physical considerations and some mathematical simplifications drive us to the following minimization problem:
\begin{equation}\label{Membrane}
	 \min \left \{ \int_{\Omega} \left (  |\nabla u|^2 + f(X)\cdot u \right ) dX \suchthat u = g \text{ on } \partial \Omega \right \},
\end{equation}
 where $\Omega$ is a bounded domain of $\mathbb{R}^n$, $g$ represents an eventual deformation (twist) of $\partial \Omega$ and $f$ is the forcing term. Now let $\varphi$ be your favorite smooth function that vanishes on $\partial \Omega$ and consider competing function $v_t(X):= u(X) + t \varphi(X)$. Since $u$ (the position of the membrane) is a minimum point of the functional \eqref{Membrane}, elementary Calculus  implies that
$$
	\dfrac{d}{dt}  \int_{\Omega} \left (  |\nabla v_t|^2 + f(X)\cdot v_t \right ) dX \Bigg |_{t=0} = 0,
$$ 
which readily reveals that $\int_{\Omega} \nabla u \cdot \nabla \varphi + f(X) \cdot \varphi dX = 0$. Integrating by parts and using the fact that $\varphi$ was taken arbitrarily, yields 
\begin{equation}\label{Poison}
	- \Delta u = f(X), \text{ in } \Omega,
\end{equation}
at least if $u$ has enough derivatives to justify the steps above. That is, the membrane satisfies a non-homogeneous Laplace equation. A key question in the theory of elastic membranes is to understand qualitatively the membrane deformation upon a forcing term $f$. Intuitively, the elasticity properties of the membrane absorbs the forces and adjusts itself in a more organized fashion, as to keep minimizing the energy considered (tension, say). In a bit more precise mathematical terms, one is interested in understanding the regularity of $u$ in terms of some lower order norms of $f$. 

\par
\medskip

Equation \eqref{Poison} is often called Poisson equation. Its regularity theory is part of a wider class of elliptic estimates called Schauder {\it a priori} estimates, which assure that solutions to a linear, uniformly elliptic equation with $C^{0,\theta}$ data, $0 < \theta < 1$, i.e., functions $u$ satisfying 
\begin{equation}\label{linear eq}
	 a_{ij}(X) D_{ij} u = f(X)
\end{equation}
where 
\begin{equation}\label{Holder assumptions}
	0< \lambda \le a_{ij}(X) \le \Lambda, \quad a_{ij}, f \in C^{0,\theta},
\end{equation}
are locally of class $C^{2,\theta}$.  Furthermore, there exists a constant $C>0$, depending only upon dimension, ellipticity constants $(\lambda, \Lambda)$, and the $\theta$--H\"older continuity of the data, $\|a_{ij}\|_{C^{0,\theta}}$ and $\|f\|_{C^{0,\theta}}$, such that
\begin{equation}\label{Schauder est}
	\|u\|_{C^{2,\theta}(B_{1/2})} \le C \cdot \|u\|_{L^\infty(B_1)}.
\end{equation}

This is a fundamental estimate in the theory of PDEs and its vast range of applications. There are also  Schauder {\it a priori} estimates for equations in divergence form:
\begin{equation}\label{linear eq div}
	 \div \left ( a_{ij}(X) \nabla  u \right ) = f(X),
\end{equation}
under the same assumptions \eqref{Holder assumptions}. Nonetheless, usually their regularity theory lacks one derivative: if $u$ solves \eqref{linear eq div}, under  \eqref{Holder assumptions}, then
\begin{equation}\label{Schauder est div}
	\|u\|_{C^{1,\theta}(B_{1/2})} \le C \cdot \|u\|_{L^\infty(B_1)}.
\end{equation}

Before continuing, one should notice that estimate \eqref{Schauder est} is by no means a trivial fact. Even if one looks at the Poisson equation $\Delta u = f(X)$, the Schauder estimate is saying that if the trace of the Hessian of function $u$ is $\theta$-H\"older continuous, for some $0< \theta < 1$, then all second order derivatives, even the ones that do not appear in the equation, like $\partial_{x_1x_2} u$ is also  $\theta$-H\"older continuous for the same exponent $\theta$. 

\par 
\medskip

Intuition becomes even less precise when one looks at problems modeled in heterogeneous media. The mathematical formulation of such problems now involve variable coefficients, $a_{ij}(X)$ as in \eqref{linear eq} and in \eqref{linear eq div}. In such context, it has been a common accepted aphorism that the continuity of the Hessian of a solution (in non-divergence form) or else the continuity of the gradient of a solution (in the divergence theory) could never be superior than the continuity of the medium. Notwithstanding, it has been recently established that such a phenomenon can occur, at least in particular meaningful points. 

\begin{theorem}\label{thm T}  Let $a_{ij}(X)$ be a $\theta$-H\"older continuous, uniform elliptic matrix, i.e., $0< \lambda \mbox {Id} \le a_{ij}(X) \le \Lambda \mbox{Id},$ with $a_{ij} \in C^{0,\theta}(B_1)$. If $u$ verifies
$$
	a_{ij}(X) D_{ij} u = 0, \text{ in } B_1
$$ 
and $Z$ is a zero Hessian point, i.e., $D^2u(Z) = 0$, then $u \in C^{2, 1^-}$ at $Z$. If, on the other hand, $u$ solves a divergence for equation
$$
	 \div \left ( a_{ij}(X) \nabla  u \right )  = 0, \text{ in } B_1
$$ 
and $Z$ is a critical point, i.e., $\nabla u(Z) = 0$, then  $u \in C^{1, 1^-}$ at $Z$.
\end{theorem}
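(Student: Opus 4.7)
I would prove both parts by Caffarelli's compactness-iteration scheme, handling the non-divergence case in detail; the divergence version is analogous. Translate $Z$ to the origin. Since $u \in C^{2,\theta}_{\loc}$ by Schauder \eqref{Schauder est}, the hypothesis $D^2 u(0) = 0$ is pointwise meaningful; subtracting the affine function $u(0) + \nabla u(0) \cdot X$ leaves a solution with $u(0) = 0$, $\nabla u(0) = 0$, and $D^2 u(0) = 0$. A linear change of coordinates gives $a_{ij}(0) = \delta_{ij}$, and the dilation $u \mapsto u(r\,\cdot)/\|u\|_{L^\infty(B_r)}$ normalizes $\|u\|_{L^\infty(B_1)} \le 1$ while shrinking the coefficients' Hölder seminorm by a factor $r^\theta$; hence one may assume $[a_{ij}]_{C^{0,\theta}(B_1)} \le \delta_0$ for any preassigned $\delta_0$.

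\textbf{One-step decay.} For fixed $\alpha \in (0,1)$, I would prove the existence of $\rho = \rho(\alpha,n,\lambda,\Lambda) \in (0,1/2)$ and $\delta_0 > 0$ such that, under the normalizations above,
$$ \sup_{B_\rho}|u| \;\le\; \rho^{2+\alpha}. $$
The argument is by compactness. If the conclusion fails, pick a sequence $u_k$ with $[a^k_{ij}]_{C^{0,\theta}} \to 0$ violating the decay. Uniform Schauder bounds extract a $C^2_{\loc}$-convergent subsequence $u_k \to u_\infty$; the uniform convergence $a^k_{ij} \to \delta_{ij}$ identifies $u_\infty$ as harmonic, and the $C^2$ convergence forwards the pointwise identities $u_\infty(0) = 0$, $\nabla u_\infty(0) = 0$, $D^2 u_\infty(0) = 0$ to the limit. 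Real analyticity of $u_\infty$ then yields $|u_\infty(X)| \le C|X|^3$ near the origin. Choosing $\rho$ so small that $C\rho^3 < \rho^{2+\alpha}/2$ contradicts the failure for large $k$.

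\textbf{Iteration and divergence case.} The rescaled solution $u_1(X) := \rho^{-(2+\alpha)}u(\rho X)$ satisfies the same hypotheses as $u$: the new coefficients $a_{ij}(\rho X)$ still equal the identity at $0$ with seminorm $\rho^\theta [a_{ij}]_{C^{0,\theta}} \le \delta_0$; the vanishing at the origin persists (in particular $D^2 u_1(0) = \rho^{-\alpha}D^2 u(0) = 0$); and $\|u_1\|_{L^\infty(B_1)} \le 1$ by the preceding step. Iterating gives $\sup_{B_{\rho^k}}|u| \le \rho^{k(2+\alpha)}$ for every $k \in \mathbb{N}$, which on the annuli $\rho^{k+1} \le |X| \le \rho^k$ becomes the pointwise bound $|u(X)| \le C|X|^{2+\alpha}$, i.e., $C^{2,\alpha}$ regularity at $Z$. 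Arbitrariness of $\alpha < 1$ yields $C^{2,1^-}$. The divergence case is identical with rescaling exponent $1+\alpha$: the compactness limit $u_\infty$ is still harmonic (after linearly absorbing $a_{ij}(0)$) with $\nabla u_\infty(0) = 0$, so $|u_\infty(X)| \le C|X|^2$, comfortably beating $|X|^{1+\alpha}$ and driving the iteration to $C^{1,1^-}$.

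\textbf{Main obstacle.} The delicate step is the compactness: one must be sure that $C^2_{\loc}$ (resp.\ $C^1_{\loc}$) convergence holds along the sequence so that $D^2 u_k(0) = 0$ (resp.\ $\nabla u_k(0) = 0$) survives in the limit $u_\infty$. This requires the Schauder constants in \eqref{Schauder est}--\eqref{Schauder est div} to be uniform along the sequence, which follows because $a^k_{ij}(0) = \delta_{ij}$ together with the small-seminorm normalization produce a uniform $C^{0,\theta}$ bound (not merely seminorm) on the coefficients. The conceptual insight is that imposing $D^2 u(Z) = 0$ (resp.\ $\nabla u(Z) = 0$) lifts the order of vanishing of the harmonic competitor by one, purchasing exactly one full derivative of regularity above the generic Schauder exponent $\theta$ and producing the borderline exponent $2 + 1^-$ (resp.\ $1 + 1^-$).
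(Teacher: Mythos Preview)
Your proposal is correct and follows essentially the same compactness--iteration scheme as the paper: approximate by a harmonic function whose relevant derivatives vanish at the origin, harvest the resulting higher-order decay, and iterate. The differences are cosmetic: the paper treats the divergence case in detail (you do non-divergence), works with $L^2$ averages in Campanato style and locates an approximating constant $\tau_k$ at each step, whereas you work in $L^\infty$ and subtract the affine (resp.\ constant) part once at the outset. Both are standard realizations of the same tangential-regularity idea, and the key compactness point you isolate---that uniform Schauder bounds pass $D^2u_k(0)=0$ (resp.\ $\nabla u_k(0)=0$) to the harmonic limit---is exactly what the paper uses in its Lemma~\ref{approx}.
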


The proof of Theorem \ref{thm T} is based on a rather powerful method  based on ``tangential regularity theories". We will comment about these set of ideas in Section \ref{sct TRT}  and will survey about the result from Theorem \ref{thm T} in Section \ref{sct 2}. 
\par 
\medskip

Regularity theory for elliptic problems in discontinuous media, i.e., when $a_{ij}(X)$ is merely elliptic, but with no further continuity assumptions, requires a whole new level of understanding and its mathematical treatment is  profound. Such an issue appears for instance in the study of problems coming from Calculus of Variations:
$$
	\int_\Omega \mathcal{F}(\nabla u) \ dX \longrightarrow \min
$$
where $\mathcal{F} \colon \mathbb{R}^n \to \mathbb{R}$ is a convex function (19th Hilbert's problem). It also appears in composite material sciences, where the diffusion process takes place within a medium made from two or more constituent materials. This causes  significant differences on physical and chemical properties along the medium. Each individual component of the material remains separate and distinct within the finished structure. From the macroscopic view point, we are led to the study of elliptic equations with coefficients given by:
$$
	b_{ij}(X) = \sum\limits_{\omega \in \mathcal{B}}\left [ a_{ij}(\omega) \cdot {1}_\omega \right ]  
$$
where $\mathcal{B}$ is a partition of $\Omega$, each $a_{ij}(\omega)$ is uniformly elliptic, i.e., $\lambda \text{Id} \le a_{ij}(\omega) \le \Lambda \text{Id}$,  and 
$a_{ij}(\omega) \not =  a_{ij}(\omega\prime), $ whenever $ \omega \not = \omega\prime.$

Another simple example of great interest in applied sciences are linear equations with homogenization process. It is given a smooth elliptic matrix $a_{ij}(X)$ and one needs to look at limiting configurations as $\varepsilon \to 0$ of solutions to elliptic equations (both in divergence and in non-divergence form) with coefficients given by
\begin{equation}\label{linear}
		 b_{ij}^\epsilon(X) = a_{ij}\left ( \frac{X}{\epsilon}  \right ). 
	\end{equation}
It is decisive then to obtain estimates that do not depend upon the homogenizing parameter $\epsilon>0$. Hence, it requires estimates that do not depend on continuity assumptions of the coefficient matrix.

\par 
\medskip

{\it Universal} continuity estimates for solutions to divergence form equations is the contents to the nowadays called De Giorgi-Nash-Moser regularity theory.
Twenty years past until Krylov and Safonov established the non-divergence counterpart of such a result. 
%
Through such eruditions, we now know that any solution to any elliptic equation has a universal modulus of continuity. In principle such results can be understood as {\it a priori} estimates. However, it is possible to establish the results using the language of weak solutions, i.e., $H^1$ distributional solutions in the divergence form equations and the theory of viscosity solutions for non-variational problems.

Even though these original results are for linear equations, they are indeed non-linear devices.  For instance, Krylov-Safonov Harnack inequality unlocks the theory of fully nonlinear elliptic equations. If $ \mbox{Sym}(n)$ denotes the space of $n\times n$ symmetric matrices, an operator $F\colon \mbox{Sym}(n) \to \mathbb{R}$ is said to be elliptic if 
$$
	\lambda \|P\| \le F(M+P) - F(M) \le \Lambda \|P\|,
$$
for all $P\ge 0$. If $u$ is a viscosity solution to a fully nonlinear elliptic equation
$$
	F(D^2u) = 0,
$$
then, {at least heuristically} both $u$ and $u_\nu$ satisfy a uniform elliptic equation, for any directional derivative $u_\nu$. Hence, by the Krylov-Safonov Harnack inequality, both $u$ and $u_\nu$ are of class  $C^{0,\beta}$ for some $0<\beta<1$. If in addition the operator $F$ is assumed to be convex, then a Theorem due to Evans  \cite{E} and Krylov  \cite{K} separately implies that $u$ is indeed $C^{2,\alpha}$. The question on whether solutions to fully nonlinear elliptic equations are of class $C^2$ challenged the community for more than twenty years. It was answered in the negative by  Nadirashvili and Vladut, \cite{NV1, NV2}, who exhibit solutions to uniform elliptic equations whose Hessian blows-up.  

As much as Krylov-Safonov regularity theory yields the study of fully nonlinear elliptic equations, the corresponding  De Giorig-Nash-Moser theorem was a starting point for the development of degenerate quasilinear elliptic equations of the $p$-Laplace type, back in late 1960's:
$$
	\Delta_p u := \div \left ( |\nabla u|^{p-2} \nabla u \right ).
$$

The $p$-Laplacian operator $\Delta_p$ appears naturally in the mathematical formulations of a number of physical problems. One can see it as a nonlinear version of the Laplacian (notice that when $p=2$ be get back the original Laplace operator). The smoothing effects of the $p$-Laplace operator is less efficient than its linear member, the Laplacian $\Delta$, but not because of nonlinear effects, but actually because the ellipticity of the operator degenerates as the gradient vanishes. It is known that alway from the set of critical points, $\mathfrak{C}(u):= \{X \suchthat \nabla u (X) = 0\}$, $p$-harmonic functions, i.e., solutions to 
\begin{equation}\label{p-harmonic}
	-\Delta_p u  = 0
\end{equation}  
are in fact quite smooth -- real analytic. That is, the {\it villain} of the theory is precisely the {\it a priori unknown} set of critical points. The first major result in the area is due to Uraltseva, who proved in \cite{U}, for the degenerate case, $p\ge 2$, that $p$-harmonic functions, are locally of class $C^{1,\alpha}$ for some exponent $0 < \alpha < 1$ that depends on $p$ and dimension. Nevertheless, $C_\text{loc}^{1,\alpha_p}$  is indeed optimal, since along its singular set $\mathfrak{C}(u)$, $p$-harmonic functions are not, in general, of class $C^2$, nor even $C^{1,1}$. The optimal exponent for gradient H\"older  continuity of $p$-harmonic functions are known only in dimension two, \cite{IM}.

\medskip

Nowadays, regularity theory for elliptic and parabolic problems is still a rather active line of research, that permeates several fields of mathematical sciences and applications. It is often regarded as a noble but difficult subject. It is present in a number of important recent breakthroughs as a decisive, key step. Through the next three Sections, we shall discuss a general geometric approach for regularity issues that emerge in several different contexts. In Section \ref{sct TRT} we will present a notion of tangential regularity theories, and in Sections \ref{sct 2} and \ref{sct DC} we will exemplify the powerful applicability of such a notion by establishing improved regularity estimates in two different scenarios. 


\section{Tangential regularity theories} \label{sct TRT}

In this Section we shall discuss about a general, unifying method for approaching regularity issues in models involving diffusion processes. Generally speaking, different models involve different intrinsic structures which have direct influence on the smoothing effects of the operators associated to the corresponding mathematical problem. On the very top podium in the regularity hierarchy among all diffusive differential operators is the rich theory of harmonic functions. 

\par

Infinite order {\it a priori} estimates, maximum principles, Hanack inequality, Liouville theorems, monotonicity and frequency formulas, unique continuation principles, energy estimates, etc, are some of the many mathematical tools available in the study of problems ruled by the Laplace operator. It is often that the model considered is not as generous as the Laplace equation, but still some regularity theory is available.

Tangential regularity theories refers to the heuristic idea of imagining the {\it universe} of all second order diffusive operators. Each model is represented by a dot. There are infinitely many paths between all the models, among themselves. Usually the paths represent some sort of compactness for solutions to problems around the models to be linked. Of course one can always construct straight  paths between two given operators, $L_0$ and $L_1$, namely $L_t := tL_1 + (1-t)L_0$. Depending on the aimed property to be proven for a given model, a specific path ought to be considered. 

Let us discuss a simple example as to clarify the ideas. Suppose we are willing to develop a regularity theory for the Poisson equation:
\begin{equation}\label{sct TRT eq01}
	\Delta u = f(X),
\end{equation}
discussed in Section \ref{sct intro}. Instead of looking at \eqref{sct TRT eq01} as a non-homogeneous Laplace equation, we should understand it as a family of models within the universe of all second order diffusive operators; for each {\it class of} forcing terms $f(X)$, we have one different model. The goal is to establish a regularity estimate for solutions to \eqref{sct TRT eq01} upon the control on a given norm of the forcing term, which determines the class of forcing terms considered. Such a control will be converted into compactness, and it will allow us to understand the Laplace equation, $\Delta h = 0$ as the tangential equation for the problem, in the following way: if the norm of $f$ is very tiny, then it means that the model is close (in that imaginary universe) to the harmonic functions representative. The program then is to import the rich {\it tangential} regularity theory (TRT) available for harmonic functions back to the original model, properly corrected through the path used to access that TRT.  For instance, suppose that $f \in L^p(B_1)$, with $n/2 < p < n$, where $n$ is the dimension. By performing a zoom in, i.e., by considering $v(X) = u(\rho X)$, we can assume that 
$\|f\|_p \ll 1$. Now, after such a  zoom-in, for each $\lambda >0$, we define:
$$
	u_\lambda (X) := \dfrac{1}{\lambda^{2-n/p}} u(\lambda X).
$$
Easily one verifies that
$$
	\Delta u_\lambda = f_\lambda(X) =: \lambda^{n/p} f(\lambda X),
$$
and that $\|f_\lambda\|_{L^p(B_1)} \le \|f\|_p \ll 1$. In other terms, we have verified that $u_\lambda$ is also a solution to a model that is close enough to the harmonic functions representative. Carrying this idea on, one proves that $u \in C^{0,\alpha}_\text{loc}(B_1)$ for $\alpha := 2-n/p$. Indeed, such a geometric approach to regularity extends in a natural way to establish the following sharp regularity estimate, see for instance \cite{T0, T1}, for more general results:

\begin{theorem} Let $u$ satisfy $\Delta u = f(X)$ in $B_1 \subset \mathbb{R}^n$. Suppose $f \in L^p_{{weak}}(B_1)$. Then
\begin{itemize}
	\item If $p = \frac{n}{2}$, then $\|u\|_{BMO(B_{1/2})} \le C_n \left ( \|u\|_{L^2(B_1)} + \|f\|_{L^{n/2}_{ {weak}}(B_1)} \right )$.
	\item If $n/2 < p < n$, then $\|u\|_{ C^{0,2-n/p}(B_{1/2})} \le C_n \left ( \|u\|_{L^2(B_1)} + \|f\|_{L^{p}_{ {weak}}(B_1)} \right ).$
	\item  If $p = {n}$, then $\|u\|_{LogLip(B_{1/2})} \le C_n \left ( \|u\|_{L^2(B_1)} + \|f\|_{L^{n}_{ {weak}}(B_1)} \right )$.
	\item If $ n < p < \infty$, then $\|u\|_{ C^{1,1-n/p}(B_{1/2})} \le C_n \left ( \|u\|_{L^2(B_1)} + \|f\|_{L^{p}_{ {weak}}(B_1)} \right ).$
	\item If $f \in BMO(B_1)$, then $\|u\|_{ C^{1,LogLip}(B_{1/2})} \le C_n \left ( \|u\|_{L^2(B_1)} + \|f\|_{BMO(B_1)} \right ).$
\end{itemize}	

\end{theorem}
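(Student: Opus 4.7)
The plan is to establish all five estimates by a unified tangential argument, taking harmonic functions as the tangential model and transferring their smoothness back to $u$ through a dyadic iteration scheme. By the scaling $u_\lambda(X) := \lambda^{-(2-n/p)}u(\lambda X)$ shown in the excerpt, the equation $\Delta u_\lambda = f_\lambda$ preserves the weak $L^p$ norm of the source, and a similar scale-invariance holds at the endpoints (up to logarithmic corrections). After rescaling we may therefore assume $\|u\|_{L^2(B_1)} \le 1$ and $\|f\|_\star \le \delta$ for any prescribed smallness parameter $\delta>0$, where $\|\cdot\|_\star$ is the relevant norm of the case.

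The engine of the proof is an \emph{approximation lemma}: for every $\varepsilon>0$ there is $\delta=\delta(\varepsilon,n,p)>0$ such that if $\|f\|_\star\le\delta$ and $\|u\|_{L^2(B_1)}\le 1$, there exists a harmonic function $h$ in $B_{3/4}$ with $\|u-h\|_{L^\infty(B_{1/2})}\le\varepsilon$. I would prove this by contradiction/compactness: a hypothetical sequence $u_k$ with $f_k\to 0$ in the relevant norm would, by the De~Giorgi--Nash--Moser or Calder\'on--Zygmund estimate, be precompact in $C^0_{\mathrm{loc}}$; a limit would satisfy $\Delta u_\infty=0$ distributionally and contradict the assumed separation from harmonic functions. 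For the BMO-of-$f$ case one needs the John--Nirenberg-type embedding $BMO\hookrightarrow L^q_{\mathrm{loc}}$ for every $q<\infty$ to produce enough compactness.

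With the lemma in hand, the iteration proceeds case by case. For $n/2<p<n$, the tangential object is a \emph{constant} (the value $h(0)$): using $|h(X)-h(0)|\le C|X|$ for harmonic $h$ bounded by a universal constant, one picks $\rho$ small with $C\rho\le \tfrac12\rho^{2-n/p}$, then chooses $\varepsilon=\tfrac12\rho^{2-n/p}$ and $\delta$ correspondingly, and rescales to $u_1(X):=\rho^{-(2-n/p)}(u(\rho X)-h(0))$, which satisfies the same hypotheses; iterating yields $\mathrm{osc}_{B_{\rho^k}}u\le C\rho^{k(2-n/p)}$, i.e.\ the $C^{0,2-n/p}$ estimate. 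For $n<p<\infty$ the tangential object is the \emph{affine approximation} $h(0)+\nabla h(0)\cdot X$; using the second-order Taylor control $|h(X)-h(0)-\nabla h(0)\cdot X|\le C|X|^2$ and matching $C\rho^2\le\tfrac12\rho^{1+(1-n/p)}$ gives, after iteration, the $C^{1,1-n/p}$ bound. The $BMO$ case $p=n/2$ follows the same scheme but with no decay of the oscillation; one only tracks that $\mathrm{osc}_{B_{\rho^k}}u$ stays bounded, which yields bounded mean oscillation via the John--Nirenberg characterization.

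The delicate points are the two logarithmic endpoints. At $p=n$ the natural Campanato exponent hits $\alpha=1$, and the geometric series coming from the iteration diverges at the harmonic rate; tracking the losses explicitly gives a factor proportional to the number of iterations $k\sim\log(1/\rho^k)$, yielding the $\mathrm{Log\,Lip}$ modulus $\omega(r)=r\log(1/r)$. The same phenomenon at the $C^{1,\cdot}$ level, when $f\in BMO$, produces $C^{1,\mathrm{Log\,Lip}}$. I expect the main obstacle to be a \emph{uniform} quantification of the approximation lemma and the iteration in the endpoint cases: the usual Hölder-style iteration allows geometric loss of error, whereas at the borderline each dyadic step must be carried out with control of the \emph{linear} accumulation of errors, which in turn forces a sharper version of the compactness step in which one quantifies closeness to harmonics by a modulus, not just a single $\varepsilon$.
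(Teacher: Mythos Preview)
Your proposal is correct and follows essentially the same approach as the paper: the paper does not give a self-contained proof of this theorem but only sketches the tangential method (the scaling $u_\lambda(X)=\lambda^{-(2-n/p)}u(\lambda X)$, the observation that $\|f_\lambda\|_{L^p}\le\|f\|_{L^p}$, and the idea of importing harmonic regularity by compactness), then refers to \cite{T0,T1} for the details. Your compactness-based approximation lemma plus dyadic iteration against constants (for $n/2<p<n$), against affine functions (for $n<p<\infty$), and with logarithmic bookkeeping at the borderline exponents is exactly the program indicated there, and you have correctly identified the one genuinely delicate point, namely the linear (rather than geometric) accumulation of errors at $p=n$ and at the $BMO$ endpoint.
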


In terms of the membrane problem illustrated in Section \ref{sct intro}, this means that even if we apply a singular force $f$ or order $|X|^{n/p}$, with $p>n/2$, the membrane absorbs and distributes such an impact in a way that it remains bounded. At the singular point, the membrane will develop a cusp of order $|X|^{2-n/p}$.  

It is possible to establish corresponding sharp parabolic estimates, both in space and in time, see for instance \cite{TU}. One can also obtain $L^q$ estimates of $u$ provided $f \in L^p$ for $p< n/2$. 

\par

Let us now discuss another type of results that can be accessed by similar reasonings. We revisit the theory of fully nonlinear equation operators. For any given pair of ellipticity constants $0< \lambda \le \Lambda < 1$, we shall interpret the equation $F(D^2u) = 0$ as a unified model. From the analytical point of view, a way to do that is to interpret the equation $F(D^2u ) = 0$ (for some $(\lambda, \Lambda)$-uniform elliptic operator) as
\begin{equation} \label{1}
	\mathscr{M}^{-}_{\lambda,\Lambda}(D^2u) \le 0 \le \mathscr{M}^{+}_{\lambda,\Lambda}(D^2u),
\end{equation}
and $\mathscr{M}^{\pm}_{\lambda,\Lambda}$ represent the extremal Pucci operators:
$$
	\mathscr{M}^{+}_{\lambda,\Lambda}(M) := \Lambda \left( \sum_{e_i >0} e_i \right) + \lambda \left( \sum_{e_i <0} e_i \right), \quad 
	\mathscr{M}^{-}_{\lambda,\Lambda}(M) := \lambda \left( \sum_{e_i >0} e_i \right) + \Lambda \left( \sum_{e_i <0} e_i \right)
$$
where $\{e_i : 1 \le i \le n\}$ are the eigenvalues of $M \in \mbox{Sym}(n)$. Easily one notice that if 

$$
	\frac{\lambda}{\Lambda} \to 1,
$$
then the model converges to the harmonic functions representative (i.e. the dot that represents the space of all harmonic functions). Thus, one can interpret the Laplace equation $\Delta u = 0$ as the {\it geometric tangential equation} of the {\it manifold} formed by fully nonlinear elliptic operators $F$ as  $\mathfrak{e} := 1- \frac{\lambda}{\Lambda}  \to 0$.  In turn, at every scale it is possible to find a harmonic function close to a viscosity solution to \eqref{1}, provided the ellipticity aperture is small enough. Iterating such an argument gives that the graph of a viscosity solution $u$ can be approximated by a quadratic polynomial  with an error of order $\sim \mbox{O}\left (\rho^{2+\alpha} \right )$, for any given $\alpha$. This proves the following Theorem (which can also be understood as a consequence of Cordes-Nirenberg estimates):

\begin{theorem}\label{principal}
Let $u \in C(B_1)$ be a bounded viscosity solution of \eqref{1}. Given $\alpha \in (0,1)$ there exists 
$\epsilon_0 = \epsilon_0(n,\alpha)>0$
such that, if $1-\frac{\lambda}{\Lambda} < \epsilon_0,$
then $u$ is locally of class $C^{2,\alpha}$ and 
\begin{equation} \label{pol1}
	\|u\|_{C^{2, \alpha}(B_{1/2})} \le C \cdot \|u\|_{L^\infty(B_1)},
\end{equation}
for a universal constant $C=C(n,\alpha)>0$.
\end{theorem}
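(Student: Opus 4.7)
The plan is to implement the geometric tangential approach sketched in the excerpt: treat the Laplace equation as the geometric tangential model at $\mathfrak{e}=1-\lambda/\Lambda \to 0$, and transfer harmonic $C^{2,\alpha}$ information to $u$ via a scale-by-scale approximation/iteration argument.

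\textbf{Step 1 (Approximation lemma).} I would first prove: for every $\delta>0$ there is $\epsilon_0=\epsilon_0(n,\delta)>0$ such that whenever $1-\lambda/\Lambda<\epsilon_0$ and $u$ is a viscosity solution of \eqref{1} with $\|u\|_{L^\infty(B_1)}\le 1$, there exists a harmonic function $h$ in $B_{3/4}$ with $\|u-h\|_{L^\infty(B_{3/4})}\le\delta$. The argument is by compactness and contradiction. If the statement failed, there would be sequences $\lambda_k/\Lambda_k\to 1$ and solutions $u_k$ of the corresponding Pucci inequalities staying uniformly $\delta_0$-apart from every harmonic function. By the Krylov--Safonov $C^{0,\beta}$ estimate applied to Pucci solutions, the $u_k$ are precompact in $C_{\mathrm{loc}}^0$. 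Letting $u_\infty$ be a limit and passing to the limit in the viscosity sense (noting that $\mathscr M^{\pm}_{\lambda_k,\Lambda_k}\to\Delta$ locally uniformly on symmetric matrices as $\lambda_k/\Lambda_k\to 1$) one obtains $\Delta u_\infty=0$, contradicting the separation hypothesis.

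\textbf{Step 2 (One-step improvement).} Because harmonic functions are smooth, the approximating $h$ admits a second-order Taylor polynomial $P(x)=h(0)+\nabla h(0)\cdot x+\tfrac12 x^\top D^2h(0)\,x$ with universal bounds on its coefficients and $\|h-P\|_{L^\infty(B_\rho)}\le C_n\rho^3$. Therefore
\[
 \|u-P\|_{L^\infty(B_\rho)} \le \delta + C_n\rho^3.
\]
Given $\alpha\in(0,1)$, fix $\rho=\rho(n,\alpha)$ small so that $C_n\rho^3\le\tfrac12\rho^{2+\alpha}$, then choose $\delta=\tfrac12\rho^{2+\alpha}$, which in turn fixes $\epsilon_0=\epsilon_0(n,\alpha)$ through Step 1. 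One concludes
\[
 \|u-P\|_{L^\infty(B_\rho)} \le \rho^{2+\alpha}.
\]

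\textbf{Step 3 (Iteration).} Rescale by setting
\[
 v(x):=\frac{u(\rho x)-P(\rho x)}{\rho^{2+\alpha}}.
\]
A direct computation, together with the subadditivity relations $\mathscr M^+(M+N)\le\mathscr M^+(M)+\mathscr M^+(N)$ and $\mathscr M^-(M+N)\ge \mathscr M^-(M)+\mathscr M^-(N)$, shows that $v$ still satisfies Pucci inequalities of the form \eqref{1} (modulo tracking a right-hand side that is $O(\rho^\alpha)$ and can be absorbed, or equivalently by viewing $v$ as solving a new uniformly $(\lambda,\Lambda)$-elliptic equation of the same type). Since $\|v\|_{L^\infty(B_1)}\le 1$, Step 2 applies to $v$ and produces a new quadratic polynomial $\tilde P$. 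Translating back, this yields a sequence of quadratics $\{P_k\}$ with
\[
 \|u-P_k\|_{L^\infty(B_{\rho^k})}\le \rho^{k(2+\alpha)},
\]
together with geometric control on the coefficients of $P_{k+1}-P_k$, namely $|a_{k+1}-a_k|\lesssim \rho^{k(2+\alpha)}$, $|\nabla P_{k+1}(0)-\nabla P_k(0)|\lesssim \rho^{k(1+\alpha)}$, $\|D^2P_{k+1}-D^2P_k\|\lesssim \rho^{k\alpha}$.

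\textbf{Step 4 (Convergence to a pointwise $C^{2,\alpha}$ expansion and universal estimate).} The geometric estimates make the $P_k$ Cauchy in the space of quadratic polynomials, giving a limit $P_\infty$ whose Hessian has universal norm. Standard interpolation of $\|u-P_\infty\|_{L^\infty(B_r)}\le C r^{2+\alpha}$ across all scales $r\in(0,1/2)$, combined with the same estimate at every point $x_0\in B_{1/2}$ (the whole argument is translation-invariant), upgrades the pointwise second-order expansion to the interior $C^{2,\alpha}$ estimate \eqref{pol1}, with constant $C=C(n,\alpha)$.

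\textbf{Expected main obstacle.} The delicate part is Step 1: one must have, before starting, a universal modulus of continuity for Pucci solutions (Krylov--Safonov) that is independent of how close $\lambda/\Lambda$ is to $1$, and must carefully justify the stability of viscosity solutions under uniform convergence of both solutions \emph{and} operators. The only other bookkeeping nuisance is ensuring in Step 3 that subtracting a quadratic and rescaling preserves the class \eqref{1}; this is handled by the subadditivity inequalities for $\mathscr M^\pm$ and by absorbing the resulting $O(\rho^\alpha)$ right-hand side into the approximation parameter $\delta$ at the next iteration.
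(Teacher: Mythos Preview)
Your proposal follows precisely the approach sketched in the paper: harmonic approximation by compactness as $\lambda/\Lambda\to 1$, one-step $\rho^{2+\alpha}$ improvement via the second-order Taylor expansion of the approximating harmonic function, and iteration. The paper offers no more detail than your outline.

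One point in Step~3, however, is not quite right and deserves care. Your claim that the right-hand side appearing after subtracting the harmonic quadratic $P$ and rescaling is $O(\rho^\alpha)$ is incorrect. Since $\mathrm{tr}(D^2P)=0$, one has $|\mathscr{M}^\pm(D^2P)|=(\Lambda-\lambda)S$ with $S=\sum_{e_i>0}e_i(D^2P)$ universally bounded; after dividing by $\rho^{\alpha}$ the right-hand side in the Pucci inequalities for $v$ becomes of order $(\Lambda-\lambda)\rho^{-\alpha}$, and at the $k$th step of order $(\Lambda-\lambda)\rho^{-k\alpha}$, which diverges and cannot be ``absorbed into $\delta$.'' The alternative you mention---viewing $v$ as solving a new uniformly $(\lambda,\Lambda)$-elliptic equation---is the correct route, but it requires using that $u$ solves a \emph{fixed} equation $F(D^2u)=0$ (this is the paper's intended reading of \eqref{1}) and then adjusting each approximating polynomial to $\tilde P=P+\tfrac{t}{2}|x|^2$ with $t$ chosen so that $F(D^2\tilde P)=0$. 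Because $|F(D^2P)|\le(\Lambda-\lambda)\|D^2P\|$ and $F$ is uniformly elliptic, one gets $|t|\le C\epsilon_0$, so $\tilde P$ still approximates $u$ to order $\rho^{2+\alpha}$ after possibly shrinking $\epsilon_0$. With this correction the rescaled function solves $F_k(M):=\rho^{-k\alpha}F(\rho^{k\alpha}M+D^2\tilde P_k)=0$, which is $(\lambda,\Lambda)$-elliptic with $F_k(0)=0$; hence $v_k$ satisfies \eqref{1} with zero right-hand side at every step, and the iteration closes cleanly.
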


Yet within the theory of fully nonlinear equations, we could find another path to access the tangential regularity theory of harmonic functions. Indeed, given a fully nonlinear elliptic operator $F$, we could look at the family of elliptic scalings 
$$
	F_\mu(M) := \dfrac{1}{\mu} F(\mu M), \quad \mu >0.
$$
This is a continuous family of operators preserving the ellipticity constants of the original equation. If $F$ is differentiable at the origin (recall, by normalization we always assume $F(0) = 0$), then 
$$
	F_\mu (M) \to \partial_{M_{ij}}F(0)M_{ij}, \quad \text{as } \mu \to 0.
$$
In other words, the linear operator  $M \mapsto \partial_{M_{ij}}F(0)M_{ij}$ is the \textit{tangential equation} of $F_\mu$ as $\mu \to 0$. Now, if $u$ solves an equation involving the original operator $F$, then $u_\mu := \frac{1}{\mu} u$ is a solution to a related equation for $F_\mu$. However, if in addition it is known that the norm of $u$ is at most $\mu$, then it accounts into saying that $u_\mu$ is a normalized solution to the $\mu$-related equation, and hence we can access the universal regularity theory available for the (linear) tangential equation by compactness methods. This proves the following Theorem (see \cite{S}, \cite{ASS}, \cite{DT}).

\begin{theorem} Let $u\in C^{0}\left( B_{1}\right) $ be a viscosity solution
to $F(D^{2}u) = f(X)$ in $B_1$, where $F$ is  a $C^1$, $(\lambda, \Lambda)$-elliptic operator and $f \in C^{0,\alpha}(B_1)$,
for some $0< \alpha < 1$. There
exist a $ \overline{\delta} > 0$, depending only upon $n,\lambda
,\Lambda, \alpha, \|f\|_\alpha$, and the $C^1$ norm of $F$, such that if
$$
    \sup_{B_{1}}| u | \leq \overline{\delta}
$$
then $u\in C^{2, \alpha } (B_{ {1}/{2}} )$ and
$$
    \|u\|_{C^{2,\alpha}(B_{1/2})}  \le M \cdot \overline{\delta},
$$
where $M$ depends only upon $n,\lambda,\Lambda, \alpha, \|f\|_\alpha$, and the $C^1$ norm of $F$.
\end{theorem}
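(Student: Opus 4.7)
The plan is to execute the tangential regularity iteration indicated in the preceding paragraphs: approximate the nonlinear equation by the constant-coefficient linear tangential operator
$$
    L(M) := \partial_{M_{ij}}F(0)\, M_{ij},
$$
which arises as the limit of the elliptic scaling family $F_\mu(M) := F(\mu M)/\mu$ as $\mu \to 0^+$, by the $C^1$ regularity of $F$. For $L$ the classical interior Schauder $C^{2,\alpha}$ estimate is available, and the smallness $\|u\|_\infty \le \bar\delta$ is precisely what keeps the rescaled iterates inside the regime where $F$ is well approximated by $L$.

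First I would establish an approximation lemma: for every $\varepsilon > 0$ there exists $\delta = \delta(\varepsilon, n, \lambda, \Lambda, \|F\|_{C^1}) > 0$ such that whenever $v \in C(B_1)$ with $\|v\|_{L^\infty(B_1)} \le 1$ is a viscosity solution of $F_\mu(D^2 v) = g$ in $B_1$ with $\mu + \|g\|_{L^\infty} \le \delta$, there is an $h$ satisfying $L(D^2 h) = 0$ in $B_{3/4}$ with $\|v - h\|_{L^\infty(B_{3/4})} \le \varepsilon$. The argument is by contradiction: along a sequence of purported counterexamples $v_k$ with $\mu_k,\|g_k\|_\infty \to 0$, the Krylov--Safonov interior $C^{0,\gamma}$ estimate yields precompactness in $C(\overline{B}_{3/4})$, while the uniform convergence $F_{\mu_k} \to L$ on bounded matrix sets together with stability of viscosity solutions identifies the limit as $L$-harmonic.

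Coupling this lemma with the $C^{2,\alpha}$ Schauder estimate for $L$ at the origin gives the single-scale polynomial improvement: for every $\alpha \in (0,1)$ there exist universal $\rho \in (0,1/4)$, $\delta>0$ and $C_0$ so that every $v$ as above admits a quadratic polynomial $P$ with $L(D^2 P) = 0$, $|P(0)|+|\nabla P(0)|+\|D^2 P\| \le C_0$, and $\sup_{B_\rho}|v - P| \le \rho^{2+\alpha}$; the choice $\varepsilon \sim \rho^{2+\alpha}$ is what fixes the H\"older exponent. I would then iterate this at dyadic scales by introducing, for the running approximating polynomial $P_k(X) = a_k + b_k\cdot X + \tfrac{1}{2}X^T C_k X$, the rescaled function
$$
    v_k(X) := \frac{(u - P_k)(\rho^k X)}{\bar\delta\, \rho^{k(2+\alpha)}},
$$
which solves an equation of the same structural type: an operator of the form $\widetilde F_k(N) := [F(C_k + \bar\delta\rho^{k\alpha} N) - F(C_k)]/(\bar\delta\rho^{k\alpha})$ with effective scaling parameter $\bar\delta\rho^{k\alpha} \to 0$, and a source $\widetilde g_k(X) = [f(\rho^k X) - F(C_k)]/(\bar\delta\rho^{k\alpha})$ whose $L^\infty$ bound is dictated by the $C^{0,\alpha}$ modulus of $f$. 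The coefficients $a_k,b_k,C_k$ form Cauchy sequences whose geometric rates deliver the second-order Taylor polynomial of $u$ at $0$ with sharp remainder $\|C_k - C_\infty\| \lesssim \bar\delta\,\rho^{k\alpha}$; translation and a covering of $B_{1/2}$ upgrade this pointwise control into $\|u\|_{C^{2,\alpha}(B_{1/2})} \le M\bar\delta$.

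The hard part is the self-propagation of the iteration: at every scale $k$ one must verify both $\|v_k\|_\infty \le 1$ (the inductive hypothesis) and the smallness condition $\bar\delta\rho^{k\alpha} + \|\widetilde g_k\|_\infty \le \delta$ required to reapply the single-scale improvement. The first scaling parameter vanishes automatically; the source term is only bounded, not small, and its size is governed by $\|f\|_{C^\alpha}$ divided by a power of $\bar\delta$. Arranging that this quantity stays below the lemma's threshold $\delta$ — while simultaneously keeping the accumulated Hessian $C_k$ in the region where $DF(C_k)$ is uniformly close to $L$ (so $\widetilde F_k$ genuinely has $L$ as its tangential equation) — is precisely what forces $\bar\delta$ to depend on $\|f\|_{C^\alpha}$ and on $\|F\|_{C^1}$ as stated in the theorem.
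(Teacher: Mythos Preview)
Your proposal is correct and follows precisely the approach the paper sketches in the paragraph preceding the theorem: tangential linearization via the elliptic scalings $F_\mu \to L = DF(0)$, a compactness-based approximation lemma, single-scale quadratic improvement from Schauder theory for $L$, and dyadic iteration. The paper itself does not carry out the details, deferring instead to \cite{S, ASS, DT}; your plan is a faithful expansion of that heuristic. One minor technical correction: the approximation lemma should allow a \emph{bounded} source $g$ (with the tangential comparison $h$ then solving $L(D^2h)=g(0)$ rather than $L(D^2h)=0$), since as you yourself note the rescaled source $\widetilde g_k$ in the iteration is of order $\|f\|_{C^{0,\alpha}}/\bar\delta$ and cannot be made small by shrinking $\bar\delta$; with that adjustment the self-propagation closes cleanly.
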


\medskip

One can also try to ``import" regularity from the behavior of the operator at the infinity. That is, depending on how the operator behaves for very large matrices should give information of $C^{1,\alpha}$ estimates for solutions of the homogeneous problem $F(D^2u) = 0$. This brings to the notion of the {\it ressession} function:
$$
	F^\star(M) := \lim\limits_{\mu \to 0} \mu F(\mu^{-1} M).
$$
This notion also appears naturally in the study of free boundary problems governed by fully nonlinear operators, see \cite{RT, AT1}. The idea is that the family $F_\mu(M) :=  \mu F(\mu^{-1} M)$ 
forms a path of uniform elliptic operators (each $F_\mu$ is elliptic with the same ellipticity constants as $F$), jointing $F$ and $F^\star$. Should $F^\star$ have a better regularity theory, then one should be able to import it back to $F$, up to $C^{1,1^{-}}$. The following Theorem has been proven in \cite{ST}.

\begin{theorem} \label{thm main ST}
    Let $F$ be a uniform elliptic operator. Assume any recession function
    $$
        F^\star(M) := \lim\limits_{\mu\to 0} \mu F(\mu^{-1} M)
    $$
    has $C^{1,\alpha_0}$ estimates for solutions to the homogeneous equation
    $F^\star(D^2v) = 0$. Then, any viscosity solution to
    $$
        F (D^2u) = 0,
    $$
    is of class $C^{1, \min\{1,
    \alpha_0\}^{-}}_\text{loc}$. That is, $u \in C_\mathrm{loc}^{1,\alpha}$ for any
    $\alpha < \min\{1, \alpha_0\}$. In addition, there holds
    \begin{equation}\label{a priori est main thm}
    	\|u\|_{ C^{1,\alpha}(B_{1/2})} \le C \|u\|_{L^\infty(B_1)},
    \end{equation}
    for a constant $C>0$ that depends only on $n$, $\alpha$ and $F$.
\end{theorem}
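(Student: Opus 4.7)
The plan is to execute the tangential regularity program of Section~\ref{sct TRT} along the path of $(\lambda,\Lambda)$-elliptic operators $F_\mu(M) := \mu F(\mu^{-1}M)$, $\mu \in (0,1]$, which joins $F_1=F$ to $F^\star = \lim_{\mu \to 0^+} F_\mu$. The heart of the argument is an approximation lemma: for every $\delta > 0$ there is $\eta = \eta(\delta) > 0$ such that any normalized viscosity solution $v \in C(B_1)$ of $F_\mu(D^2 v) = 0$ with $\mu \le \eta$ is $\delta$-close in $L^\infty(B_{1/2})$ to some $h$ solving $F^\star(D^2 h) = 0$. I would prove it by contradiction and compactness: a violating sequence $\{v_j\}$ associated to $\mu_j \to 0$ is equicontinuous by the universal Krylov--Safonov $C^\beta$ estimate (which depends only on $\lambda, \Lambda, n$, all preserved along the path), so a subsequence converges locally uniformly to some $v_\infty$; since $F_{\mu_j} \to F^\star$ locally uniformly on $\mathrm{Sym}(n)$ by the very definition of the recession function, viscosity stability passes to the limit and yields $F^\star(D^2 v_\infty) = 0$, a contradiction.

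Fix $\alpha < \min\{1,\alpha_0\}$. Combining the approximation lemma with the assumed $C^{1,\alpha_0}$ estimate for $F^\star$ gives the one-step affine decay: there exist $\rho \in (0,1/2)$, $\eta_0 > 0$, and a universal $K>0$ such that every normalized $v$ solving $F_\mu(D^2 v)=0$ with $\mu \le \eta_0$ admits an affine $L$, with $|L(0)| + |\nabla L| \le K$, for which $\sup_{B_\rho}|v-L| \le \rho^{1+\alpha}$. Letting $L$ be the first-order Taylor polynomial of the approximating $h$, the $C^{1,\alpha_0}$ estimate yields $\sup_{B_\rho}|h-L| \le C\rho^{1+\alpha_0}$; one then chooses $\rho$ small enough that $C\rho^{1+\alpha_0} \le \tfrac12 \rho^{1+\alpha}$ (possible since $\alpha<\alpha_0\le 1$) and $\delta = \tfrac12 \rho^{1+\alpha}$, which in turn fixes $\eta_0 = \eta(\delta)$.

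The proof concludes with a geometric iteration. After an initial dilation $\tilde u(x) := u(rx)$ with $r^2 \le \eta_0$, the function $\tilde u$ is normalized and solves $F_{r^2}(D^2\tilde u) = 0$, bringing us inside the regime of the one-step lemma. I would then build inductively affine functions $L_k$ satisfying
\begin{equation*}
    \sup_{B_{\rho^k}} |\tilde u - L_k| \le \rho^{k(1+\alpha)},
\end{equation*}
by iterating the one-step lemma on the blow-ups $v_k(x) := \rho^{-k(1+\alpha)}(\tilde u - L_k)(\rho^k x)$, each normalized and solving $F_{r^2\rho^{k(1-\alpha)}}(D^2 v_k)=0$; since $r^2 \rho^{k(1-\alpha)} \le r^2 \le \eta_0$ for every $k \ge 0$, the lemma applies at every scale. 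The correction $L_{k+1}-L_k$ has constant of size $O(\rho^{k(1+\alpha)})$ and gradient of size $O(\rho^{k\alpha})$, so $L_k$ converges to an affine $L_\infty$; interpolating between consecutive scales converts this geometric decay into pointwise $C^{1,\alpha}$ control at the origin, and centering the argument at every point of $B_{1/2}$ then yields~\eqref{a priori est main thm}.

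The principal obstacle I anticipate is the approximation lemma. One must arrange that $F_\mu \to F^\star$ locally uniformly on $\mathrm{Sym}(n)$, not merely pointwise, and that if distinct subsequences of $F_\mu$ converge to different operators, the $C^{1,\alpha_0}$ hypothesis — read as \emph{any} recession function in the statement — applies uniformly to each such limit. The stability of viscosity solutions under this possibly non-smooth convergence of fully nonlinear operators, together with the compactness supplied by Krylov--Safonov that remains valid in the limit $\mu = 0$, is what makes the whole scheme close.
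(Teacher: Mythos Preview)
Your proposal is correct and follows precisely the tangential-regularity scheme the paper outlines for this theorem (the paper itself does not give a full proof here, only the heuristic of joining $F$ to $F^\star$ along the path $F_\mu(M)=\mu F(\mu^{-1}M)$ and citing \cite{ST}). The compactness/approximation step, the one-step affine decay using the $C^{1,\alpha_0}$ theory of $F^\star$, and the iteration via the rescalings $v_k$ solving $F_{r^2\rho^{k(1-\alpha)}}(D^2v_k)=0$ are exactly the ingredients; your observation that $\alpha<1$ is what keeps $r^2\rho^{k(1-\alpha)}\le\eta_0$ at every scale is the reason for the cap $\min\{1,\alpha_0\}$ in the statement. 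One small slip: in the one-step decay you write ``possible since $\alpha<\alpha_0\le 1$'', but $\alpha_0$ need not be at most $1$; what you actually use (and what the theorem requires) is $\alpha<\alpha_0$ for the one-step gain and, separately, $\alpha<1$ for the iteration to close.
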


An immediate Corollary of Theorem \ref{thm main} is the following:

\begin{corollary}\label{cor 01} Let $F \colon \mathcal{S}(n) \to \mathbb{R}$ be a uniform elliptic operator and $u$
a viscosity solution to $F(D^2u) = 0$ in $B_1$. Assume any
recession function $F^\star(M) := \lim\limits_{\mu \to 0} \mu
F(\mu^{-1}M)$ is concave. Then $u \in
C^{1,\alpha}_\text{loc}(B_1)$ for every $\alpha < 1$.
\end{corollary}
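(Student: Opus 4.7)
The plan is to apply Theorem \ref{thm main ST} after upgrading the assumption ``$F^\star$ concave'' into a $C^{1,\alpha_0}$ estimate for solutions of the homogeneous tangential equation $F^\star(D^2 v) = 0$, for every exponent $\alpha_0 < 1$. The key observation is that any recession function $F^\star$ inherits the uniform ellipticity constants $(\lambda, \Lambda)$ of $F$: if $0<\lambda \|P\|\le F(M+P)-F(M)\le \Lambda\|P\|$ for $P\ge 0$, rescaling by $M\mapsto \mu^{-1}M$ and multiplying by $\mu$ preserves this inequality, so the limit $F^\star$ is again $(\lambda,\Lambda)$-elliptic. Being a limit of concave (or of averaged) operators is not needed here; we are told directly that $F^\star$ is concave.

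With $F^\star$ concave and uniformly elliptic, the Evans--Krylov theorem (\cite{E}, \cite{K}) applies to viscosity solutions of $F^\star(D^2 v) = 0$, yielding a universal exponent $\beta = \beta(n,\lambda,\Lambda) \in (0,1)$ and a $C^{2,\beta}$ a priori estimate
$$
	\|v\|_{C^{2,\beta}(B_{1/2})} \le C \|v\|_{L^\infty(B_1)}.
$$
In particular, solutions to the homogeneous equation $F^\star(D^2 v) = 0$ are locally $C^{1,1}$ (with a universal estimate), and hence satisfy $C^{1,\alpha_0}$ estimates for every exponent $\alpha_0 \in (0,1)$.

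Now I fix an arbitrary $\alpha \in (0,1)$ and choose $\alpha_0 \in (\alpha,1)$. By the previous step, $F^\star$ possesses $C^{1,\alpha_0}$ estimates for solutions to $F^\star(D^2v)=0$. Invoking Theorem \ref{thm main ST} with this $\alpha_0$, we conclude that any viscosity solution $u$ to $F(D^2u)=0$ belongs to $C^{1,\alpha'}_{\mathrm{loc}}(B_1)$ for every $\alpha' < \min\{1,\alpha_0\} = \alpha_0$; in particular $u \in C^{1,\alpha}_{\mathrm{loc}}(B_1)$ with the estimate
$$
	\|u\|_{C^{1,\alpha}(B_{1/2})} \le C \|u\|_{L^\infty(B_1)}.
$$
Since $\alpha < 1$ was arbitrary, this finishes the proof.

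I do not expect genuine obstacles: the only points that deserve care are (i) verifying that $F^\star$ is $(\lambda,\Lambda)$-elliptic so that Evans--Krylov is applicable, and (ii) ensuring the $C^{1,\alpha_0}$ hypothesis of Theorem \ref{thm main ST} is available for every $\alpha_0<1$, which is precisely what the $C^{1,1}$ upgrade afforded by Evans--Krylov provides. The ``arbitrarily close to $1$'' conclusion in the corollary is then just a matter of letting $\alpha_0 \uparrow 1$ in Theorem \ref{thm main ST}.
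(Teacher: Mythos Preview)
Your argument is correct and is exactly the intended one: the paper treats this as an immediate consequence of Theorem~\ref{thm main ST}, and the missing link you supply---that a concave, uniformly elliptic recession function $F^\star$ enjoys Evans--Krylov $C^{2,\beta}$ estimates, hence $C^{1,\alpha_0}$ estimates for every $\alpha_0<1$---is precisely what makes the corollary immediate. Nothing further is needed.
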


\medskip

As exemplified above, the method does not require the Laplace to be the tangential equation. For instance, one can study Poisson equations for more general operators, by interpreting the corresponding homogeneous problem as the tangential equation, \cite{C1, T0, T1, TU, ART}. Also, such an idea can be implemented in more adverse circumstances, such as in phase transmission problems, see for instance \cite{AT}.  Another tempting path within the universe of all diffusive models one can consider is the $p$ variation on the $p$-Laplacian operator. We conclude this Section explaining how the  set of ideas implies the continuity of the underlying regularity theory for $p-$Laplacian operators with respect to $p$.  This is done by the possibility of obtaining a universal compactness result. For instance, fix $M_0 \gg 2$ and work within the range $p \in [2, M_0]$. The following Lemma can be proven:

\begin{lemma}[Uniform in $p$ compactness] \label{unif compac}
Given $\delta >0$, there exists $\epsilon>0$, depending only on  $n$, $M_0$ and $\delta$, such that if $q\in [2, M_0]$, $u$ is a $q-$harmonic function in $B_1$, with $|u| \leq 1$, and $|q-p| < \epsilon$, then we can find a $p-$harmonic function $w$ in $B_{1/2}$, with $|w|\leq 1$, such that
\begin{equation} \label{closy}
	\sup_{B_{\frac{1}{2}}} |w-u|  \leq \delta.
\end{equation}
\end{lemma}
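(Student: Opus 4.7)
The natural route is a proof by contradiction based on compactness and continuity of the Dirichlet problem for the $p$-Laplacian in the parameter $p$. Suppose the lemma were false. Then there exist $\delta_0>0$, sequences $q_k, p_k \in [2,M_0]$ with $|q_k - p_k| \to 0$, and $q_k$-harmonic functions $u_k$ in $B_1$ with $\sup_{B_1}|u_k|\le 1$, such that for every $p_k$-harmonic function $w$ in $B_{1/2}$ satisfying $|w|\le 1$ we have $\sup_{B_{1/2}}|w-u_k| > \delta_0$.

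The first step is to gain compactness for the sequence $\{u_k\}$. Uraltseva's interior $C^{1,\alpha}$ estimates for $p$-harmonic functions (and their refinements, see for example \cite{U}) provide, for the range $q\in[2,M_0]$, a modulus of continuity for $u_k$ on $\overline{B_{3/4}}$ which is uniform in $k$. Passing to a subsequence, $q_k\to q_\infty\in[2,M_0]$ (and hence $p_k\to q_\infty$ as well), and $u_k\to u_\infty$ uniformly on $\overline{B_{3/4}}$, with $|u_\infty|\le 1$. The classical stability of weak solutions to $p$-Laplace type equations under variations of $p$ then ensures that $u_\infty$ is $q_\infty$-harmonic in $B_{3/4}$; this is the content of standard $\Gamma$-convergence arguments for the functionals $v\mapsto \int|\nabla v|^q$ (see, e.g., the literature cited in the paper around degenerate quasilinear equations).

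Next, define $w_k$ as the unique $p_k$-harmonic function in $B_{1/2}$ with boundary datum $u_\infty|_{\partial B_{1/2}}$, which is continuous because of the uniform interior regularity above. The comparison principle yields $|w_k|\le \sup_{\partial B_{1/2}}|u_\infty|\le 1$, so each $w_k$ is admissible. Since the Dirichlet problem for the $p$-Laplacian depends continuously on $p$ for $p\in[2,M_0]$ with fixed continuous boundary datum, and since $u_\infty$ is itself the $q_\infty$-harmonic extension of its own boundary trace on $\partial B_{1/2}$, we obtain $w_k\to u_\infty$ uniformly on $\overline{B_{1/2}}$. Combining this with the uniform convergence $u_k\to u_\infty$ yields
\[
\sup_{B_{1/2}}|w_k-u_k| \le \sup_{B_{1/2}}|w_k-u_\infty| + \sup_{B_{1/2}}|u_\infty-u_k| \longrightarrow 0,
\]
contradicting the assumption that the left-hand side exceeds $\delta_0$ for every $k$.

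The main obstacle I expect is not the contradiction scheme itself but the two ingredients it relies on: (i) interior H\"older or $C^{1,\alpha}$ estimates for $q$-harmonic functions with constants that are \emph{uniform} for $q$ in a bounded interval $[2,M_0]$, and (ii) the joint continuity of the Dirichlet problem with respect to both the exponent $p_k\to q_\infty$ and the boundary datum $u_k|_{\partial B_{1/2}}\to u_\infty|_{\partial B_{1/2}}$. Both facts are known, but tracking their quantitative dependence on $p$ requires care (energy estimates together with Caccioppoli inequalities with constants depending continuously on $p$, and a variational $\Gamma$-convergence argument for the limit identification); it is precisely this uniformity that makes the statement a true uniform-in-$p$ compactness result, rather than a family of separate compactness statements indexed by $p$.
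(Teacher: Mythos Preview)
Your proof is correct and follows essentially the same contradiction/compactness scheme as the paper: negate the conclusion, extract a convergent subsequence of the $q_k$-harmonic functions using uniform-in-$p$ interior estimates, identify the limit $u_\infty$ as $q_\infty$-harmonic by stability, solve auxiliary $p_k$-Dirichlet problems with datum $u_\infty$, and pass to the limit to obtain a contradiction via the triangle inequality. The only technical difference is that the paper poses the auxiliary Dirichlet problems on the larger ball $B_{2/3}$ rather than on $B_{1/2}$; this makes the needed convergence $w_j \to u_\infty$ on $\overline{B_{1/2}}$ a purely interior matter and sidesteps the up-to-boundary regularity (uniform in $p$) that your choice of $B_{1/2}$ implicitly invokes.
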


Let us comment on the proof of such a result: we suppose, for the sake of contradiction, that the thesis of the lemma does not hold true. This means that for a certain $\delta_0 >0$, there exist sequences $(q_j)_j$, $(u_j)_j$ and $(p_j)_j$, with
\begin{equation} \label{lad}
\left \{
\begin{array}{l}
q_j \in [2,M_0];\\
  \mathrm{div} \left( |\nabla u_j|^{q_j-2} \nabla u_j \right) = 0  \quad \textrm{in} \ \ B_1;\\
|u_j|\leq 1;\\
|p_j-q_j| \leq \frac{1}{j};
\end{array}
\right.
\end{equation}
however for every $p_j-$harmonic function $w$ in $B_{\frac{1}{2}}$,
\begin{equation} \label{hanninha}
\sup_{B_{\frac{1}{2}}} |u_j-w|  > \delta_0.
\end{equation}
By compactness, we have, up to subsequences, $p_j, q_j \rightarrow q_\infty \in [2, M_0]$, $u_j \rightarrow u_\infty$ locally in an appropriate functional space. By stability we can pass to the limit in the equation satisfied by the $u_j$ to conclude that $u_\infty$ is $q_\infty-$harmonic in $B_{\frac{2}{3}}$.We now solve, for each $p_j$, the following boundary value problem
\begin{equation} \label{bvp}
\left \{
\begin{array}{rcl}
  \mathrm{div} \left( |\nabla w_j|^{p_j-2} \nabla w_j \right) = 0  & \textrm{in}  & B_{\frac{2}{3}}\\
 w_j = u_\infty & \textrm{on}  & \partial B_{\frac{2}{3}}\\
\end{array}
\right.
\end{equation}
and pass to the limit in $j$, concluding that also $w_j \rightarrow u_\infty$ uniformly in $ B_{\frac{1}{2}}$ (by uniqueness). Finally, choosing $j$ large enough, we obtain
$$|u_j - w_j| \leq |u_j - u_\infty| + |w_j - u_\infty| \leq \frac{\delta_0}{2} + \frac{\delta_0}{2} = \delta_0 \quad \textrm{in} \ \ B_{\frac{1}{2}}$$
which is a contradiction to \eqref{hanninha}.
 
\medskip

One can use such a device to obtain improved sharp estimates  for problems governed by $p-$Laplacian operators, near the Laplacian, \textit{i.e.}, for $p$ close to 2.  After that, one can try to extend such estimates for a wider range, just by using now $(2+\epsilon)$-Laplacian as the tangential equation. Of course there is always the danger that at each iteration, the size of the step decreases  in a summable fashion. As to have a conclusive argument to cover the whole range of exponents $[2,M_0]$ as finer analysis is required.

\medskip

In the next two Section we shall apply these general ideas explained here as to establish improved regularity estimates for two different models. The readers will be able to recognize the steps illustrated by the examples above. Each problem to be treated involves different tangential paths. In Section \ref{sct 2} we shall access the $C^2$ {\it a priori} estimates for harmonic functions with zero first order approximations; whereas in Section \ref{sct DC} we will make use of maximum principle tools available for the geometric tangential equation of the model as to establish a surprising gain of smoothness for problems ruled by the $p$-Laplacian along a particularly important region.


\section{Improving Schaulder estimates} \label{sct 2}

In this Section we shall proof the divergence part of Theorem \ref{thm T}. In fact this is a consequence of a deeper Theorem established in \cite{T2}. The proof presented here though will be simplified and hence more appealing from the didactical point of view. 

\par

The initial set-up is the following, we have a function $u \in H^1(B_1)$ satisfying 
\begin{equation}\label{sct 2 eq1}
	 \div \left ( a_{ij}(X) \nabla  u \right ) = 0,
\end{equation}
in the distributional sense. The coefficients are assumed to be uniform elliptic and $\theta$-H\"older continuous, for some $0< \theta \ll 1$. From Schauder classical Theorem, $u$ is locally of class $C^{1,\theta}$, hence we can talk about $\nabla u$ at a given interior point. We will focus our analysis at a critical point of $u$, i.e., a point $Z\in B_1$, with $\nabla u(Z) = 0$. 

Let us open the floor by explaining the heuristics behind such a (surprising) result. If one is trying to show that an arbitrary  given function $u$ is of class $C^{1,\alpha}$ at, say, the origin, the task is to find an affine function $\ell(X)$ that approximates $u$ up to an error of order $\text{O}(r^{1+\alpha})$. If $0$ happens to be a critical point for $u$, i.e., $0\in \mathfrak{C}(u)$, then the 1st order of the approximation $\ell$ should be zero, and we are led to control the oscillation balance of $u$ around a real constant. Now this is in accordance to the general (still heuristic) fact that even though elliptic equations in divergence form are 2nd order differential operators, in fact it reflects an oscillation balance around constants, rather than affine functions, as in the non-divergence theory. That explains why in many situations the regularity theory for divergence form operators {\it has one derivative less} than the non-divergence one.

Continuing with the set-up, we notice that since $a_{ij}$ is continuous, up to a zoom-in, we can assume that $|a_{ij}(X) - a_{ij}(0)|$ is as small as we wish. Also, up to a change of variables, we can assume, with no loss, that $a_{ij}(0) = \delta_{ij}$ (the identity matrix). Here it is the first key observation:

\begin{lemma}\label{approx} Let $u \in H^1(B_1)$ be a weak solution to \eqref{sct 2 eq1}, normalized as to $\intav{B_1} |u|^2 dX \le 1$ and $\|a_{ij}\|_{C^{0,\theta}} \le 1$.  Then, given a number $\delta>0$, there exists a constant $\varepsilon > 0$, depending only on $\delta,  n,  \lambda, \Lambda$, such that if $\nabla u (0) = 0$ and
\begin{eqnarray}
	\left | a_{ij}(X) - \delta_{ij} \right | \le \varepsilon \label{small cond CL 3}
\end{eqnarray}
then there exists a harmonic function $h \colon B_{1/2} \to \mathbb{R}$, for which $0$ is also a critical point, such that
\begin{equation}\label{thesis - CL}
	  \intav{B_{1/2}} |u - h|^2 dX \le \delta^2.
\end{equation}
\end{lemma}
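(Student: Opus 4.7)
The plan is to argue by contradiction, implementing the tangential regularity scheme outlined in Section \ref{sct TRT}: as the coefficient matrix becomes close to the identity, solutions of \eqref{sct 2 eq1} should become $L^2$-close to harmonic functions, and the critical point condition should be preserved under the passage to the limit. To this end, suppose the conclusion fails. Then there exist $\delta_0 > 0$, matrices $a^k_{ij}$ with $\|a^k_{ij}\|_{C^{0,\theta}} \le 1$ and $|a^k_{ij}(X) - \delta_{ij}| \le 1/k$, and weak solutions $u_k \in H^1(B_1)$ of $\div(a^k_{ij}\nabla u_k) = 0$ with $\intav{B_1}|u_k|^2\,dX \le 1$ and $\nabla u_k(0) = 0$, but such that for every harmonic function $h$ in $B_{1/2}$ with $\nabla h(0)=0$ one has $\intav{B_{1/2}}|u_k - h|^2\,dX > \delta_0^2$.

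The second step is to extract a limit with enough regularity. Because $\|a^k_{ij}\|_{C^{0,\theta}}\le 1$ uniformly, the classical divergence-form Schauder estimate \eqref{Schauder est div} furnishes a bound
\[
\|u_k\|_{C^{1,\theta}(\overline{B_{3/4}})} \le C(n,\lambda,\Lambda,\theta),
\]
uniform in $k$. Arzelà--Ascoli then produces a subsequence with $u_k \to u_\infty$ in $C^1(\overline{B_{1/2}})$. Simultaneously $a^k_{ij} \to \delta_{ij}$ uniformly on $B_1$. Testing the weak formulation $\int_{B_{3/4}} a^k_{ij}\partial_j u_k\,\partial_i\varphi\,dX = 0$ against $\varphi \in C^\infty_c(B_{3/4})$ and letting $k\to\infty$ (uniform convergence of $a^k_{ij}$ together with at least weak $H^1$ convergence of $u_k$ suffices) yields $\int \nabla u_\infty \cdot \nabla \varphi\,dX = 0$, so $u_\infty$ is harmonic in $B_{3/4}$. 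Crucially, the $C^1$ convergence gives $\nabla u_\infty(0) = \lim_k \nabla u_k(0) = 0$, so $0$ is a critical point of $u_\infty$. Choosing $h := u_\infty\vert_{B_{1/2}}$ then contradicts $\intav{B_{1/2}}|u_k - h|^2\,dX > \delta_0^2$ for $k$ large.

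The point demanding real care is the third ingredient above, namely that the critical-point property $\nabla u_k(0)=0$ survives in the limit. Weak $H^1$ convergence alone would not justify evaluating $\nabla u_\infty$ at a single point; what rescues us is that the uniform H\"older bound $\|a^k_{ij}\|_{C^{0,\theta}}\le 1$ is precisely what unlocks the $C^{1,\theta}$ Schauder estimate, and hence equicontinuity of $\nabla u_k$. This is the only place where the H\"older hypothesis on the coefficients is used in a non-trivial way, and is the main obstacle one must navigate carefully; the rest (qualitative stability of divergence form equations under uniform convergence of coefficients) is routine.
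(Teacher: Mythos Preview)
Your argument is correct and follows the same compactness-by-contradiction scheme as the paper's proof: negate the conclusion, extract a convergent subsequence, show the limit is harmonic with a critical point at the origin, and reach a contradiction. The only cosmetic difference is that you invoke the uniform $C^{1,\theta}$ Schauder bound and Arzel\`a--Ascoli to obtain $C^1$ convergence (hence $\nabla u_\infty(0)=0$), whereas the paper phrases the compactness as weak $H^1$/strong $L^2$ convergence together with pointwise convergence of gradients; your formulation is arguably the cleaner way to justify the pointwise passage $\nabla u_k(0)\to\nabla u_\infty(0)$, but the underlying idea is identical.
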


\begin{proof}
Let us assume, searching a contradiction, that the thesis of the Lemma fails. That means that there exist a $\delta_0 > 0$, a sequence $
 u_k \in H^1(B_1),$ with
 \begin{equation}\label{prof comp CM eq 00}
   \intav{B_1} |u_k(X)|^2 dX \le 1, \quad \& \quad   \nabla u_k (0)  = 0,
 \end{equation}
 for all $k\ge1$, and a sequence of elliptic $(\lambda, \Lambda)$-elliptic matrices $a^k_{ij}$ with 
\begin{equation}\label{prof comp CM eq 01.3}
 	\|a^k_{ij}\|_{C^{0,\theta}} \le 1, \quad \& \quad \left | a^k_{ij}(X) - \delta_{ij} \right | \to 0,
\end{equation}
such that 
\begin{equation}\label{prof comp CM eq 01}
             \div \left ( a^k_{ij}(X) \nabla u_k \right ) = 0 \text{ in } B_1;
\end{equation}
however
\begin{equation}\label{prof comp CM eq 03}
            \intav{B_{1/2}} |u_k(X) - h(X)|^2 dX \ge \delta_0^2, \quad \forall k \ge 1,
\end{equation}
for any harmonic function $h$  satisfying $\nabla h(0) = 0$. From classical Schauder estimates (or even simpler energy estimates of Caccioopoli type) we deduce that there exists a function $u_\infty \in H^1(B_{1/2})$ for which, up to a subsequence,
\begin{eqnarray}
    u_k &\rightharpoonup&  u_\infty \text{ in } H^1(B_{1/2}) \label{prof comp CM eq 04}  \\
    u_k &\to & u_\infty \text{ in } L^2(B_{1/2}) \label{prof comp CM eq 04.1}  \\
    D u_k(X) &\to& D u_\infty(X) \text{ pointwise for } X  \in B_{1/2} \label{ae conv grad},
\end{eqnarray}
Now, given a test function $\phi \in H^1_0(B_{1/2})$, in view of the approximation hypothesis \eqref{prof comp CM eq 01.3}, together with the limit granted in \eqref{prof comp CM eq 04}, we have
$$
    \begin{array}{lll}
        \displaystyle \int_{B_{1/2}} \langle  \nabla u_\infty, \nabla \phi \rangle dX &=& \displaystyle \int_{B_{1/2}} \langle a_{ij}^k(X)  \nabla u_k,  D \phi \rangle dX +\text{o}(1) \\
        & = & \text{o}(1),
    \end{array}
$$
as $k \to \infty$. Since $\phi$ was arbitrary, we conclude $u_\infty$ is a harmonic in $B_{1/2}$. Also, by \eqref{ae conv grad} together with assumption \eqref{prof comp CM eq 00}, we prove that $0$ is a critical point of $u_\infty$. Finally, confronting such conclusion  with \eqref{prof comp CM eq 04.1} and  \eqref{prof comp CM eq 03}, we reach a contradiction for $k \gg 1$. The Lemma is proven.
\end{proof}

Notice that Lemma \ref{approx} has a universal character in the sense that it is true for {\it any} solution of {\it any} elliptic equation that are under the assumptions of the Lemma -- the constant dependence is universal. This means that if we can rescale a given solution in a way to preserve the assumptions of Lemma \ref{approx}, then the same conclusion holds for the rescaled function. Continuing inductively, we should then be able to import the regularity theory available for harmonic function back to our original function, properly readjusted by the scaling process. This is our strategy from now on. 

\begin{lemma}\label{key lemma} Let $u \in H^1(B_1)$ be a normalized (i.e., $\intav{B_1} |u|^2 dX \le 1$) weak solution to \eqref{sct 2 eq1} with $\nabla u (0) = 0$, where $a_{ij}$ is $(\lambda, \Lambda)$-elliptic matrix with $\|a_{ij}\|_{C^{0,\theta}} \le 1$. Then, given $0< \alpha < 1$, there exist constants $0 < \varepsilon_0 < 1$, $0 < \varrho < \frac{1}{2}$, depending only upon $n, \lambda, \Lambda$ and $\alpha$, such that if
\begin{eqnarray}
	 	\left | a_{ij}(X) - \delta_{ij} \right | \le \varepsilon_0, \label{small cond key lemma 3}
\end{eqnarray}
then, we can find a universally bounded real constant $\tau \in \mathbb{R}$, i.e., $|\tau|< C(n, p, \lambda, \Lambda)$, such that
\begin{equation}\label{thesis key lemma}
	\intav{B_\varrho} | u(X) - \tau|^2 dX \le \varrho^{2(1+\alpha)}.
\end{equation}
\end{lemma}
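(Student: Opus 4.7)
The plan is a one-step application of Lemma~\ref{approx} followed by exploitation of the critical-point property of the approximating harmonic function. Fix $\alpha\in(0,1)$ and introduce a small parameter $\delta>0$ to be pinned down later; take $\varepsilon_0$ to be the corresponding $\varepsilon$ supplied by Lemma~\ref{approx}. Under hypothesis \eqref{small cond key lemma 3}, that lemma produces a harmonic function $h\colon B_{1/2}\to\mathbb{R}$ with $\nabla h(0)=0$ and $\intav{B_{1/2}}|u-h|^2\,dX\le\delta^2$. Since $\|h\|_{L^2(B_{1/2})}\le 1+\delta$ by the triangle inequality, universal interior estimates for harmonic functions give $\|D^2 h\|_{L^\infty(B_{1/4})}\le C_n$; combined with $\nabla h(0)=0$, Taylor's formula at the origin then yields the crucial pointwise control $|h(X)-h(0)|\le C_n|X|^2$ on $B_{1/4}$.

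The natural choice is $\tau:=h(0)$, which is universally bounded thanks to interior $L^\infty$ estimates for harmonic functions (here $|\tau|\le C(n,\lambda,\Lambda)$ is obtained automatically). For $\varrho\in(0,1/4)$ to be fixed, the triangle inequality together with the elementary comparison $\intav{B_\varrho} f\le (2\varrho)^{-n}\intav{B_{1/2}}f$ for $f\ge 0$ combines to give
\begin{equation*}
	\intav{B_\varrho} |u-\tau|^2\,dX \;\le\; 2\intav{B_\varrho}|u-h|^2\,dX + 2\intav{B_\varrho}|h-h(0)|^2\,dX \;\le\; 2\cdot (2\varrho)^{-n}\delta^2 + 2C_n^2\,\varrho^4.
\end{equation*}
It only remains to force the right-hand side below the target $\varrho^{2(1+\alpha)}$.

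This last tuning is where the order of the choices is essential. Since $\alpha<1$ gives $2+2\alpha<4$, I first pick $\varrho=\varrho(n,\alpha)\in(0,1/4)$ small enough that the Taylor term satisfies $2C_n^2\varrho^4\le\tfrac12\varrho^{2(1+\alpha)}$. Only afterward do I select $\delta=\delta(n,\alpha)$ so small that $2(2\varrho)^{-n}\delta^2\le\tfrac12\varrho^{2(1+\alpha)}$; this, through Lemma~\ref{approx}, determines $\varepsilon_0=\varepsilon_0(n,\lambda,\Lambda,\alpha)$. I do not foresee any serious obstacle: the entire gain of one order of decay comes from the identity $\nabla h(0)=0$ in the harmonic approximator, a rigidity inherited from the hypothesis $\nabla u(0)=0$ through the pointwise convergence of gradients built into Lemma~\ref{approx}. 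The reason one flattens against the constant $h(0)$ rather than an affine polynomial (and hence recovers only $C^{1,\alpha}$ rather than $C^{2,\alpha}$ behaviour) is the divergence structure of \eqref{sct 2 eq1}, in line with the heuristic already highlighted in the preamble to this section.
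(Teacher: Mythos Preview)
Your proposal is correct and follows essentially the same route as the paper's own proof: apply Lemma~\ref{approx} to obtain a harmonic approximator $h$ with $\nabla h(0)=0$, set $\tau=h(0)$, split $|u-\tau|^2$ by the triangle inequality, and then choose $\varrho$ first (using $2(1+\alpha)<4$) and $\delta$ afterwards. Your write-up is in fact slightly more careful than the paper's in two places: you restrict the Taylor estimate for $h$ to $B_{1/4}$ (interior to the domain where $h$ is harmonic), and you state the $\varrho$-selection inequality in the correct direction, $2C_n^2\varrho^4\le\tfrac12\varrho^{2(1+\alpha)}$, whereas the paper's displayed version has the two powers interchanged by an apparent misprint.
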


\begin{proof}
For $\delta > 0$ to be chosen later, let $h$ be the harmonic function in $B_{1/2}$, with $\nabla h(0) = 0$,  that is $\delta$-close to $u$ in the $L^2$-norm. The existence of such a function has been granted by Lemma \ref{approx}. From $C^{2}$ estimates on $h$, there exists a constant $C_n$ depending only on dimension, such that
$$
    |h(X) - h(0)| \le C|X|^{2}.
$$
Since $\|h\|_{L^2} \le C$, by $L^\infty$ bounds,
$$
    |h(0)| \le C.
$$
We now estimate, for $\varrho >0$ to be adjusted {\it a posteriori},
$$
    \begin{array}{lll}
        \displaystyle \intav{B_{\varrho}} |u(X) - h(0)|^2 dX & \le &   2 \left (\displaystyle \intav{B_{\varrho}} |u(X) - h(X)|^2 dX +   \intav{B_{\varrho}} |h(X) - h(0)|^2 dX  \right ) \\
        & \le & 2 \delta^2 \varrho^{-n} + 2 C_n \varrho^{4}.
    \end{array}
$$
Since $0 < \alpha < 1$, it is possible to select $\varrho$ small enough as to assure
$$
     2 C_n \varrho^{2(1+\alpha)} \le \dfrac{1}{2} \varrho^{4}.
$$
Once selected $\varrho$, as indicated above, we set
$$
    \delta := \dfrac{1}{2} \varrho^{\frac{n}{2} + 1+ \alpha},
$$
which determines the smallness condition $\varepsilon_0$, in the statement of this Lemma, through the approximation Lemma \ref{approx}. The proof is concluded. 
\end{proof}

\medskip 

We now conclude the proof of $C^{1,1^-}$ regularity of solutions at critical points. Initially, if $u$ solves
$$
	\div \left ( a_{ij}(X) \nabla u \right ) = 0 \quad \text{ in } B_1,
$$
the rescaled function
$$
	v(X) := \frac{u(\gamma X)}{M},
$$
satisfies
$$
	\div \left ( a_{ij}(\gamma X) \nabla v \right ) = 0 \quad \text{ in } B_1, \quad \& \quad \|v\|_{L^2(B_1)} \le \frac{\gamma^{-n/2}}{M} \|u\|_{L^2(B_1)}.
$$
Now, if $a_{ij}$ is $C^{0,\theta}$ H\"older continuous at the origin and $a_{ij}(0) = \delta_{ij}$, we can choose $\gamma \ll 1$ so that
$$
	\|a_{ij}(\gamma X)\|_{C^{0,\theta}} \le 1 \quad \& \quad \left |a_{ij}(\gamma X) -\delta_{ij} \right | \le \varepsilon_0,
$$
where $ \varepsilon_0>0$ is the universal number from Lemma \ref{key lemma}. Afterwards, we can choose $M \gg 1$ so that $v$ is a normalized solution. That is, up to an scaling and a normalization, any solution to a divergence form elliptic equation with $\theta$-H\"older continuous coefficients can be framed into the assumptions required by Lemma  \ref{key lemma}. 

So, we start off the proof out from \eqref{thesis key lemma}. Next we rescale the function to the unit ball and  normalize it, that is, we define $u_1 \colon B_1 \to \mathbb{R}$, by
$$
	u_2(X) := \frac{u(\varrho X) - \tau}{ \varrho^{(1+\alpha)}}.
$$
Easily one verifies that $u_1$ is under the very same assumptions of Lemma \ref{key lemma}. Thus, there exists another universally bounded constant $\tilde{\tau} \in \mathbb{R}$, such that 
$$
	\intav{B_\varrho} | u_2(X) - \tilde{\tau}|^2 dX \le \varrho^{2(1+\alpha)},
$$
which rescaling back to $u$ gives
$$
	\intav{B_{\varrho^2}} | u(X) - {\tau_2} |^2 dX \le \varrho^{2(1+\alpha)},
$$
where 
$$
	\tau_2 - \tau = \tilde{\tau} \varrho^{1+\alpha}.
$$
In the sequel, we can define
$$
	u_3(X) := \frac{u(\varrho^2 X) - \tau_2}{ \varrho^{2(1+\alpha)}},
$$
and apply the argument again. Proceeding inductively, we obtain a sequence of real numbers $\tau_k$, satisfying
$$
	\left | \tau_{k+1} - \tau_k \right | \le C  \varrho^{k(1+\alpha)},
$$
such that 
$$
	\intav{B_{\varrho^k}} | u(X) - {\tau_k} |^2 dX \le \varrho^{k(1+\alpha)},
$$
In particular $\tau_k$ is a Cauchy sequence, hence it is convergent. From the inequality above, $\tau_k \to u(0)$. 

Finally, given any $0<r \ll 1$, let $k$ be the natural number that satisfies
\begin{equation}\label{proof main 09}
	 \varrho^{k+1} \le r < \varrho^k. 
\end{equation}
We estimate
\begin{equation}\label{proof main 10}
	\begin{array}{lll}
		\displaystyle \intav{B_r} |u(X) - u(0)|^2 dX &\le&   \left ( \frac{\varrho^k}{r} \right )^n  \displaystyle\intav{B_{\varrho^k}} |u(X) - u(0)|^2 dX \\
		&\le&   \frac{2}{\varrho^n} \displaystyle \left ( \intav{B_{\varrho^k}} |u(X) -\tau_k|^2 dX + |\tau_k - u(0)|^2 \right )\\
		&\le &   \frac{2 }{\varrho^n} \cdot \left (1 +\left ( \frac{C}{1-\varrho} \right )^2 \right ) \varrho^{k\cdot 2(1+\alpha)} \\
		& \le & \left [ \frac{2 }{\varrho^n} \cdot \left (1 +\left ( \frac{C}{1-\varrho} \right )^2 \right ) \cdot \frac{1}{\varrho^{2(1+\alpha)}} \right ] \cdot r^{2(1+\alpha)} \\
		& = & \tilde{C} \cdot r^{2(1+\alpha)},
	 \end{array}
\end{equation}
for a constant $\tilde{C} > 0$ that depends only upon universal parameters. The $C^{1,\alpha}$ regularity of $u$ at $0$ follows. Notice that $0 < \theta \ll \alpha < 1$ was taken arbitrary; hence we have in fact proven that $u$ is of class $C^{1,1^{-}}$ at a critical point.

\section{Improving regularity in $p$-dead core problems} \label{sct DC} 

In this Section we turn our attention towards reaction-diffusion equations governed by $p$-Laplace operator $(2\le p < \infty)$:
$$
	\Delta_p u = f(u).
$$

Of particular interest are models coming from porous catalysis or enzymatic processes. A decisive aspect of the mathematical formulation of such problems is the existence of dead-cores, i.e., regions where the density of the given substance vanishes identically. A prototype is the equation

\begin{equation}\label{p-DC}
	\Delta_p u = \lambda_0 u^q, 
\end{equation}
where $\lambda_0>0$ is the Thiele modulus, which adjusts the ratio of reaction rate to diffusion--convection rate and $0< q< p-1$. It is known, see \cite{PS}, that within this range of powers $q$, dead-core regions may appear. 

\par

Nonnegative solutions to \eqref{p-DC} presenting dead-core has been largely investigated. It follows by a general regularity theory for functions with bounded $p$-Laplacian that a bounded solutions to \eqref{p-DC} are locally of class $C^{1,\alpha}$ for an optimal exponent $\alpha < 1$ which depends on $p$ and on dimension. For instance the function $f(X) \mapsto |X|^{p/{p-1}}$ verifies $\Delta_p f =\mbox{cte.}$ for an easily computable constant. Notwithstanding this function is merely of class $C^{1, \frac{1}{p-1}}$ at zero. Even $p$-harmonic functions are merely $C^{1,\beta}$ for some $0< \beta < 1$. 

\par

The discussion above implies that there is no hope to established a better regularity than $C^{1,\alpha}$ for solutions to the dead-core problem \eqref{p-DC}. However, with the same spirit as to improve Schauder estimates, from Section \ref{sct 2}, we investigate here whether solutions to the $p$-dead core problem are more regular at some analytic meaningful points. After thinking for a bit on the problem, one should easily come to the conclusion that the most meaningful points in dead-core problems are the touching ground points, $\partial \{u> 0 \}$. We now state the key regularity improvement estimate we shall prove in this Section:

\begin{theorem}[Improved regularity] \label{IReg} Let $u$ be a nonnegative, bounded weak solution to \eqref{p-DC} in $B_1$ and let $\xi_0 \in \partial \{u> 0 \} \cap B_{1/2}$ be a touching ground point. Then for any point $X\in B_{1/2} \cap \{u>0\}$, there holds
$$
	u(X) \le C |X-\xi_0|^{\frac{p}{p - (q+1)}},
$$
for a constant $C$ depending only on dimension, $p$, $\|u\|_\infty$, and $q$. In particular solutions are locally of class $C^{{\frac{p}{p - (q+1)}}}$ at any touching ground point.
\end{theorem}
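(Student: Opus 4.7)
The natural scaling exponent $\alpha := p/(p-1-q)$ is dictated by the fact that $u_\rho(y) := \rho^{-\alpha} u(\rho y)$ solves the same PDE $\Delta_p u_\rho = \lambda_0 u_\rho^q$ whenever $u$ does. More generally, for an arbitrary $M > 0$, the rescaling $v(y) := u(\rho y)/M$ satisfies $\Delta_p v = \mu v^q$ with
$$
\mu \;=\; \lambda_0 \rho^{p} M^{-(p-1-q)},
$$
using the identity $\alpha(p-1-q) = p$. Inflating $M$ while keeping $\rho$ fixed therefore drives the reaction coefficient to zero, which is precisely the mechanism I would exploit to produce tangential compactness towards the homogeneous $p$-Laplace equation. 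My plan begins by translating $\xi_0$ to the origin and performing one preliminary scaling so that $\|u\|_{L^\infty(B_1)} \le 1$.

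Setting $M(r) := \sup_{B_r} u$, the core claim is that there exist universal $\rho \in (0,1)$ (take $\rho = 1/2$) and $C' > 0$ such that
$$
M(\rho^k) \;\le\; C' \rho^{k\alpha} \qquad \text{for every } k \ge 0,
$$
from which the pointwise bound follows by interpolating over the dyadic annuli $\rho^{k+1} \le |X-\xi_0| < \rho^k$. The induction step is a dichotomy at scale $\rho^k$. Either $M(\rho^k) \le C\rho^{k\alpha}$ for a constant $C$ to be chosen below, in which case the trivial monotonicity $M(\rho^{k+1}) \le M(\rho^k)$ already suffices provided $C' \ge C\rho^{-\alpha}$; or $M(\rho^k) > C\rho^{k\alpha}$, and then the scaled function $v(y) := u(\rho^k y)/M(\rho^k)$ obeys $v(0) = 0$, $0 \le v \le 1$ on $B_1$ and $\Delta_p v = \mu v^q$ with $\mu < \lambda_0 C^{-(p-1-q)}$, which is as small as desired upon choosing $C$ large.

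The analytic heart of the argument is a flatness lemma in the spirit of Lemmata \ref{unif compac} and \ref{approx}: for every $\varepsilon > 0$ there exists $\delta > 0$ such that any nonnegative solution $v$ of $\Delta_p v = \mu v^q$ in $B_1$ with $v(0) = 0$, $v \le 1$, and $\mu \le \delta$ satisfies $\sup_{B_{1/2}} v \le \varepsilon$. Applying this with $\varepsilon = \rho^\alpha = (1/2)^\alpha$ exactly closes the dichotomy, yielding $M(\rho^{k+1}) \le \rho^\alpha M(\rho^k)$ in the second case. The flatness lemma itself is proved by contradiction and compactness: a violating sequence $(v_j, \mu_j)$ with $\mu_j \to 0$ is equibounded in $C^{1,\beta}_{\mathrm{loc}}(B_1)$ by the DiBenedetto–Tolksdorf–Uraltseva regularity for quasilinear equations with bounded right-hand side, so a subsequence converges locally uniformly to a nonnegative weak solution $v_\infty$ of $\Delta_p v_\infty = 0$ in $B_1$ with $v_\infty(0) = 0$. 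Harnack's inequality for nonnegative $p$-harmonic functions (Trudinger–Serrin) then forces $v_\infty \equiv 0$ on $B_{1/2}$, contradicting $\sup_{B_{1/2}} v_j \ge \varepsilon$. Thus the \emph{tangential equation} accessed in the limit is precisely $\Delta_p v = 0$.

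The main obstacle is conceptual rather than computational: the range $0 < q < p-1$ is exactly the regime in which the strong maximum principle \emph{fails} for the reaction–diffusion equation $\Delta_p v = \lambda_0 v^q$ itself (this is what permits dead cores to form in the first place), so one cannot conclude $v \equiv 0$ at the level of the original PDE. The whole point of the tangential argument is that once $\mu$ is driven to zero the dead-core pathology disappears and the strong minimum principle for the homogeneous operator is recovered. Beyond this, the bookkeeping is routine: one calibrates $\delta$ in terms of $\varepsilon = \rho^\alpha$, chooses $C$ so that $\lambda_0 C^{-(p-1-q)} \le \delta$, and sets $C' := \max\{1, C\rho^{-\alpha}\}$ to absorb the base case $M(1) \le 1$; the geometric decay $M(\rho^k) \le C'\rho^{k\alpha}$ then yields the theorem's pointwise growth estimate at $\xi_0$, and since $u(\xi_0) = 0$ the claimed $C^\alpha$ regularity at touching ground points follows.
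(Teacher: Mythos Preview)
Your proposal is correct and follows essentially the same route as the paper: your flatness lemma is exactly the paper's Lemma~\ref{FL}, proved identically via compactness toward the tangential equation $\Delta_p v_\infty = 0$ and the strong maximum principle for $p$-harmonic functions, and the dyadic iteration to the decay $M(2^{-k}) \le C'\,2^{-k\alpha}$ is the same mechanism. The only cosmetic difference is that the paper exploits the exact scale-invariance $v_{k+1}(X) := 2^{\alpha} v_k(X/2)$, which keeps the reaction coefficient identically equal to $\tilde{\varrho}^p$ at every step and thus bypasses your dichotomy, but the two iterations are equivalent.
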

 
A warning should enhance the readers reaction towards Theorem \ref{IReg}:  the touching ground surface lies in the set of critical points of solutions; which are precisely the points where the diffusion of the operator degenerates.

It is then revealing to observe that, it follows in particular from Theorem \ref{IReg} that if dead-core exponent $q > \frac{p}{2} - 1$, then solutions are $C^2$ differentiable at any touching ground point. The proof will be based on a new flatness device, obtained by TRT methods, see \cite{T-DC}. 

\begin{lemma}[Flatness estimate] \label{FL} Given $\eta>0$, there exists a $\varrho = \varrho(\eta)>0$, depending only on $\eta$, dimension and p, such that if $\phi$ satisfies $0\le \phi \le 1$, $\phi(0) = 0$ and 
\begin{equation}\label{Eq01}
	 \Delta_p \phi - \delta^p \phi^q  = 0,
\end{equation}
with $0< \delta \le \varrho(\eta)$. Then, in $B_{1/2}$, $\phi$ is bounded by $\eta$, i.e.,
\begin{equation}\label{Eq02}
	 \sup\limits_{B_{1/2}} \phi \le \eta.
\end{equation}
\end{lemma}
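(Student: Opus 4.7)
The plan is a standard compactness/contradiction argument, exactly in the spirit of the tangential regularity theories outlined in Section \ref{sct TRT} and used, for instance, in Lemma \ref{unif compac} and Lemma \ref{approx}. Heuristically, as $\delta \to 0$ the equation \eqref{Eq01} collapses onto the homogeneous $p$-Laplace equation $\Delta_p \phi = 0$; and for this \emph{tangential} model the strong minimum principle for the $p$-Laplacian forces any nonnegative solution vanishing at an interior point to vanish identically. Exporting this rigidity back to \eqref{Eq01} by compactness is what will deliver the flatness bound \eqref{Eq02}.

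Concretely, suppose for contradiction that the conclusion fails. Then there exist $\eta_0 > 0$ and sequences $\delta_k \to 0$ and $\phi_k$ satisfying $0 \le \phi_k \le 1$, $\phi_k(0) = 0$, and
\[
    \Delta_p \phi_k - \delta_k^{p}\phi_k^{q} = 0 \text{ in } B_1, \quad \sup_{B_{1/2}} \phi_k > \eta_0 \text{ for every } k.
\]
The right-hand side $\delta_k^{p}\phi_k^{q}$ is uniformly bounded by $1$, so the classical interior regularity theory for the $p$-Laplacian with bounded source (DiBenedetto, Tolksdorf, Lieberman) yields a uniform bound on $\|\phi_k\|_{C^{1,\beta}(B_{3/4})}$ for some $\beta = \beta(n,p) \in (0,1)$. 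By Arzel\`a--Ascoli, along a subsequence, $\phi_k \to \phi_\infty$ in $C^1_{\loc}(B_{3/4})$, with $0 \le \phi_\infty \le 1$ and $\phi_\infty(0) = 0$.

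Stability of weak solutions under such $C^1$ convergence, combined with $\delta_k \to 0$, allows us to pass to the limit in the weak formulation and conclude that $\phi_\infty$ is a nonnegative, bounded $p$-harmonic function in $B_{3/4}$ with $\phi_\infty(0) = 0$. By the strong minimum principle for the $p$-Laplacian, $\phi_\infty \equiv 0$ in $B_{3/4}$, hence $\sup_{B_{1/2}} \phi_\infty = 0$. The uniform convergence then gives $\sup_{B_{1/2}} \phi_k \to 0$, contradicting $\sup_{B_{1/2}} \phi_k > \eta_0$.

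The main subtle point is the passage to the limit in the degenerate principal part $|\nabla \phi_k|^{p-2}\nabla \phi_k$: this is exactly where the uniform $C^{1,\beta}$ estimate is crucial, since it provides strong (not merely weak) convergence of the gradients, making the nonlinear term pass to the limit in a straightforward way. Once that compactness is in place, the rest of the argument is a direct tangential-regularity contradiction of the same type as the one developed for Lemma \ref{approx}.
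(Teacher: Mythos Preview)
Your proof is correct and follows essentially the same compactness/contradiction scheme as the paper: assume a bad sequence with $\delta_k\to 0$, use interior $C^{1,\beta}$ estimates for the $p$-Laplacian with bounded right-hand side to extract a $C^1$-convergent subsequence, pass to the limit to obtain a nonnegative $p$-harmonic function vanishing at the origin, and invoke the strong minimum principle to force $\phi_\infty\equiv 0$, contradicting the lower bound $\eta_0$. The only differences are cosmetic (you work on $B_{3/4}$ rather than $B_{2/3}$, and you are more explicit than the paper about the source of the $C^{1,\beta}$ compactness and about why strong gradient convergence is needed to pass the nonlinear term $|\nabla\phi_k|^{p-2}\nabla\phi_k$ to the limit).
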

\begin{proof}
	Indeed,  let us suppose, for the sake of contradiction,  that the thesis of the flatness estimate Lemma fails to hold. That means that for some $\eta_0 > 0$, there exists a sequence of functions $\phi_k$ satisfying $0\le \phi_k \le 1$, $\phi_k(0) = 0$ and
$$
	\Delta_p \phi_k = k^{-p} \phi_k^p, \quad \text{in } B_1;
$$
however
\begin{equation}\label{P03}
	 \sup\limits_{B_{1/2}} \phi_k \ge \eta_0, 
\end{equation}
for all $k\ge 1$. By the standard regularity theory for the $p$-Laplacian, , up to a subsequence, $\phi_k \to \phi_\infty$ in the $C^1$ topology. Clearly it verifies
\begin{eqnarray}
	 \phi_\infty &\ge& 0, \label{infty1} \\
	\phi_\infty(0) &=& 0. \label{infty2}\\
	\Delta_p \phi_\infty & = & 0, 
\end{eqnarray}
in $B_1$. It then follows by the strong maximum principle that $\phi_\infty \equiv 0$ in say $B_{2/3}$.  We reach a contradiction on \eqref{P03} by choosing $k \gg 1$ large enough.
\end{proof}

In the sequel, we shall apply Lemma \ref{FL} recursively in order to establish the improved regularity estimate along touching ground points. With no loss of generality we can assume $\xi_0 = 0$.  For positive constants $0< \kappa, ~\varrho_\star < 1$, to be determined {\it a posteriori}, consider the normalized, scaled function 
$$
	v(X) =  \kappa \cdot u( \varrho_\star X),
$$ 
defined in $B_1$.  Easily one verifies that $v$ solves
\begin{equation}\label{P01}
	\Delta_p v = \left (\kappa^{(p-1) - q} \cdot  \varrho_\star^p \right )v^q,
\end{equation}
in the distributional sense. Next we select
$$
	\kappa := \dfrac{1}{\|u\|_{L^\infty(B_1)}}, 
$$
so that $v$ is a normalized function in $B_1$. It is time to choose and fix the value of $\varrho_\star$. For that,  we select $\eta := 2^{-\frac{p}{p- (q+1)}}$ and Lemma \ref{FL} sponsors the existence of $\tilde{\varrho} > 0$, such that for any function $0\le \phi \le 1$, $\phi(0) = 0$, satisfying
$$
	 \Delta_p\phi - \delta^p  \phi^q  = 0,
$$
with $\delta \le \tilde{\varrho}$, we verify that
\begin{equation}\label{P04.1}
	\sup\limits_{B_{1/2}} \phi(X) \le 2^{-\frac{p}{p- (q+1)}}.
\end{equation}
With the value $\tilde{\varrho} > 0$ in hands, we decide that
$$
	\varrho_\star := \tilde{\varrho} \cdot \kappa^{\frac{- [p- (q+1)]}{p}}.
$$
With these choices, $v$ is under the assumptions of Lemma \ref{FL}, for $\eta =  2^{-\frac{p}{p-(q+1)}} $, i.e.,
$$
	\sup\limits_{B_{1/2}} v(X) \le 2^{-\frac{p}{p - (q+1)}}.
$$
In the sequel, let us define $v_2 \colon B_1 \to \mathbb{R}$, by
\begin{equation}\label{P04}
	v_2(X) :=  2^{\frac{p}{p- (q+1)}} \cdot v(\frac{X}{2}).
\end{equation}
One readily verifies that
\begin{eqnarray}
	&0 \le  v_2  \le 1; \label{v21} \\
	&v_2(0) = 0; \label{v22} \\
	&\Delta_p v_2 = \tilde{ \varrho}^p v_2^{q},
\end{eqnarray}
That is, $v_2$ falls under the conditions requested by the flatness estimate Lemma \ref{FL}, which implies, 
\begin{equation}\label{P05'}
	\sup\limits_{B_{1/2}} v_2(X) \le 2^{- \frac{p}{p-(q+1)}}.
\end{equation}
Rescaling \eqref{P05'} back to $v$ yields 
\begin{equation}\label{P05}
	\sup\limits_{B_{1/4}} v(X) \le 2^{-2 \cdot \frac{p}{p-(q+1)}}.
\end{equation}
Iterating inductively the above reasoning gives the following geometric decay:
\begin{equation}\label{P06}
	\sup\limits_{B_{\frac{1}{2^k}}} v(X) \le 2^{-k \cdot \frac{p}{p-(q+1)} }.
\end{equation}
Finally, given any $0<r< \frac{\varrho_\star}{2}$, let $k\ge 1$ be an integer such that $2^{-(k+1)} < \frac{r}{\varrho_\star} \le 2^{-k}$. We have
\begin{equation} \label{thm IReg final}
	\begin{array}{lll}
		\sup\limits_{B_r} u(X) &=& \sup\limits_{B_{\frac{r}{\varrho_\star} }} v(X) \\
		&\le&  \sup\limits_{B_{ 2^{-k}}} v(X) \\
		& \le & 2^{-k \cdot \frac{p}{p-(q+1)}} \\
		&\le & \left ( \dfrac{2}{\varrho_\star} \right )^{\frac{p}{p-(q+1)} } \cdot r^\frac{p}{p-(q+1)}\\
		& =& C(n,\lambda, \Lambda, \mu) \cdot \|u\|_{L^\infty(B_1)} \cdot r^\frac{p}{p-(q+1)},
	\end{array}
\end{equation}
and the Theorem is proven.

\bigskip
\bigskip

\noindent{\bf Ackwnoledgement.} This work has been partially supported by CNPq-Brazil, Capes-Brazil and Funcap. 
\bibliographystyle{amsplain, amsalpha}

\begin{thebibliography}{60}


\bibitem{AT} Amaral, M; Teixeira, Eduardo V. {\it Free transmission problems}.  To appear in Comm. Math. Phys.

\bibitem{ART} Ara\'ujo, D.; Ricarte, G.; Teixeira, E. {\it Geometric gradient estimates for solutions to degenerate elliptic equations}  To appear in Calc. Var. Partial Differential Equations.

\bibitem{AT1} Ara\'ujo, D.; Teixeira, E. {\it Geometric approach to nonvariational singular elliptic equations}. Arch. Rational Mech. Anal. {\bf 209} (2013), no 3, 1019--1054.

\bibitem{ASS} Armstrong S., Silvestre, L. and Smart, C. \textit{Partial regularity of solutions of fully nonlinear uniformly elliptic
equations}  Comm. Pure Appl. Math. {\bf 65} (2012), no. 8, 1169--1184.

\bibitem{C1} Caffarelli, Luis A. \textit{Interior a priori estimates for solutions of fully nonlinear equations.} Ann.
of Math. (2) \textbf{130} (1989), no. 1, 189--213.

\bibitem{CC} Caffarelli, Luis A.; Cabr\'e, Xavier
\textit{Fully nonlinear elliptic equations.} American Mathematical
Society Colloquium Publications, 43. American Mathematical Society,
Providence, RI, 1995.


\bibitem{DT} dos Prazeres, D.; Teixeira, Eduardo V. {\it Asymptotics and regularity for flat solutions to fully nonlinear elliptic problems}. To appear in Ann. Sc. Norm. Super. Pisa Cl. Sci.

\bibitem{E} Evans, Lawrence C.
{\it Classical solutions of fully nonlinear, convex, second-order elliptic equations.}
Comm. Pure Appl. Math. {\bf 35} (1982), no. 3, 333--363.

\bibitem{IM} Iwaniec, Tadeusz ; Manfredi, Juan J.
{\it Regularity of p-harmonic functions on the plane.}
Rev. Mat. Iberoamericana {\bf 5} (1989), no. 1-2, 1--19.

\bibitem{K} Krylov, N. V.
{\it Boundedly inhomogeneous elliptic and parabolic equations in a domain.} (Russian)
Izv. Akad. Nauk SSSR Ser. Mat. 47 (1983), no. 1, 75--108.
 
\bibitem{NV1} N. Nadirashvili and S. Vladut,
\textit{Nonclassical solutions of fully nonlinear elliptic equations}. Geom. Funct. Anal. {\bf 17} (2007), no. 4, 1283--1296.

\bibitem{NV2} N. Nadirashvili and S. Vladut,
{\it Singular viscosity solutions to fully nonlinear elliptic
equations.} J. Math. Pures Appl. (9) {\bf 89} (2008), no. 2,
107--113.

\bibitem{PS} P. Pucci and J. Serrin, {\it Dead cores and bursts for quasilinear singular elliptic equations}, SIAM J. Math. Anal., {\bf 38}  (2006) 259--278.

\bibitem{RT} G. Ricarte and E. Teixeira {\it Fully nonlinear singularly perturbed equations and asymptotic free boundaries.}  J. Funct. Anal. vol. 261, Issue 6, 2011, 1624--1673.

\bibitem{S} Savin, O. {\it Small perturbation solutions for elliptic equations.}
Comm. Partial Differential Equations {\bf 32} (2007), no. 4-6, 557--578.

\bibitem{ST} Silvestre, Luis and Teixeira, Eduardo V. {\it Regularity estimates for fully non linear elliptic equations which are asymptotically convex.} To appear in Progress in Nonlinear Differential Equations and Their Applications.
 
\bibitem{T0} Teixeira, Eduardo V. 
{\it Sharp regularity for general Poisson equations with borderline sources. } 
J. Math. Pures Appl. (9) {\bf 99} (2013), no. 2, 150--164. 

\bibitem{T1} Teixeira, Eduardo V. {\it Universal moduli of continuity for solutions to fully nonlinear elliptic equations.} Arch. Rational Mech. Anal. {\bf 211}, 3 (2014), 911--927.

\bibitem{T2} Teixeira, Eduardo V. {\it Regularity for quasilinear equations on degenerate singular sets.}  Math. Ann.  {\bf 358}, Issue 1 (2014), Page 241--256.

\bibitem{T3} Teixeira, Eduardo V. {\it Hessian continuity at degenerate points in nonvariational elliptic problems.} To appear in Int. Math. Res. Not. IMRN

\bibitem{TU} Teixeira, Eduardo V.; Urbano, Jos\'e Miguel {\it A geometric tangential approach to sharp regularity for degenerate evolution equations.} Anal. PDE. Vol. 7, No 3 (2014), 733--744.

\bibitem{T-DC} Teixeira, Eduardo V. {\it Regularity for the fully nonlinear dead-core problem.} To appear in Math. Ann.

\bibitem{U} Uraltseva, N.N. {\it Degenerate quasilinear elliptic systems.} Zap. Na. Sem. Leningrad. Otdel. Mat. Inst.
Steklov. (LOMI) 7, 184--222 (1968)


\end{thebibliography}

\bigskip
\bigskip

\noindent \textsc{Eduardo V. Teixeira} \\
\noindent Universidade Federal do Cear\'a \\
\noindent Departamento de Matem\'atica \\
\noindent Campus do Pici - Bloco 914, \\
\noindent Fortaleza, CE - Brazil 60.455-760 \\
 \noindent \texttt{teixeira@mat.ufc.br}

\end{document}